\newtheorem{theorem}{Theorem}[section]
\newtheorem{corollary}[theorem]{Corollary}
\newtheorem{conjecture}[theorem]{Conjecture}
\newtheorem{lemma}[theorem]{Lemma}
\newtheorem{proposition}[theorem]{Proposition}
\theoremstyle{definition}
\title{On the Structure of the $h$-Vector of a Paving Matroid}
\author{ Criel Merino\thanks{Instituto de Matem\'aticas, Universidad
      Nacional Aut\'onoma de M\'exico, Area de la Investigaci\'on
      Cient\'{\i}fica, Circuito Exterior, C.U. Coyoac\'an 04510,
      M\'exico,D.F. M\'exico. e-mail:merino@matem.unam.mx. Supported
      by Conacyt of M\'exico Proyect 83977}, Steven
      D. Noble\thanks{Department of Mathematical Sciences,
      Brunel University, Kingston Lane, Uxbridge UB8 3PH,
      U.K. e-mail:mastsdn@brunel.ac.uk},
      Marcelino Ram\'irez-Iba\~nez\thanks{Instituto de Matem\'aticas, Universidad
      Nacional Aut\'onoma de M\'exico, Area de la Investigaci\'on
      Cient\'{\i}fica, Circuito Exterior, C.U. Coyoac\'an 04510,
      M\'exico,D.F. M\'exico. e-mail: marchelino@gmail.com},
      \and Rafael Villarroel\thanks{Centro de Investigaci\'on en Matem\'aticas, Universidad Aut\'onoma del Estado de Hidalgo, Carr. Pachuca-Tulancingo km. 4.5, Pachuca 42184 Hgo, Mexico. email: rafael@uaeh.edu.mx}
     }
\date{\today}
\begin{document}
\maketitle
\thispagestyle{empty}

\begin{abstract}
 We give two proofs that the $h$-vector of any paving matroid is a
 pure O-sequence, thus answering in the affirmative a conjecture made
 by R. Stanley, for this particular class of matroids.
 We also investigate the problem of obtaining good lower bounds for the number of bases of a paving
 matroid given its rank and number of elements.
 \end{abstract}
\section{Introduction}
\label{sec:intro}

 Matroids are important structures in combinatorics, particularly in
 relation to combinatorial optimization and graph theory,
 see~\cite{lawler:matroids,oxley:book,white:matroids3}.
 With any matroid $M$ there is an associated simplicial complex $\Delta(M)$ given by
 the independent
 sets of $M$. Such simplicial complexes are called \emph{matroid complexes} and are
 known to be \emph{shellable}, that is, the maximal faces are equicardinal and
 can be arranged in a certain order that helps inductive proofs. (We give a full definition of shellability in the next section.)
 One key combinatorial invariant associated with a shellable
 complex is its $h$-vector which encodes information such as, for example,
 its face and Betti numbers. For these reasons
  shellable
 complexes have received much attention,
 see~\cite{bjorner:shellable-cohen,bjorner+wachs:shellable-non-pure-I,bjorner+wachs:shellable-non-pure-II,van-tuyl+villarreal,ziegler}. The concept of shellability
 is also important in theoretical
 computer science as
 the entries of the $h$-vector of a graphic matroid $M(G)$ are the
 coefficients of the $H$-form of the reliability polynomial of the underlying graph $G$,
 see~\cite{chari+colbourn}.

  A non-void set of monomials $\mathcal{M}$ is a \emph{multicomplex}
  if whenever $m\in \mathcal{M}$ and $m'|m$, then $m'\in
  \mathcal{M}$. A finite
  or infinite
  sequence $h$=($h_0$, $h_1$, $\ldots$, $h_d$) of
  integers is called an  \emph{$O$-sequence} if there exists a multicomplex
  containing exactly $h_i$ monomials of degree $i$.
  An $O$-sequence is \emph{pure} if there exists a multicomplex containing $h_i$ monomials of degree $i$ such that all the maximal elements in the multicomplex have the same degree. Properties of pure O-sequences
  are mentioned in Section~\ref{sec:prelim}.

 In 1977, Richard Stanley made the following conjecture linking $h$-vectors of matroid complexes and $O$-sequences~\cite{stanley:cohen-macaulay}, (see also~\cite{stanley:combinatorics-commutative-algebra}).
  \begin{conjecture}\label{conj:Stanley}
   The $h$-vector of a matroid complex is a pure O-sequence.
 \end{conjecture}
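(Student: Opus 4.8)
The full conjecture is still open, so the realistic target — and the one announced in the abstract — is the special case where $M$ is a \emph{paving} matroid of rank $r$ on $n$ elements; this already requires the essential ideas. The first thing to do is pin down the $h$-vector. The defining feature of a paving matroid is that every set of size at most $r-1$ is independent, so the face numbers of $\Delta(M)$ are $f_{i-1}=\binom{n}{i}$ for $0\le i\le r-1$ and $f_{r-1}=b$, the number of bases; these coincide with those of the uniform matroid $U_{r,n}$ except in the top dimension, and hence so does the $h$-vector: $h_i=\binom{n-r-1+i}{i}$ for $0\le i\le r-1$, while $h_r=\binom{n-1}{r}-t$, where $t=\binom{n}{r}-b$ is the number of non-bases. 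A short matroid argument (flats of rank at most $r-2$ are independent, so distinct dependent hyperplanes meet in fewer than $r$ elements) shows each non-basis lies in a unique dependent hyperplane, so $t=\sum_i\binom{|H_i|}{r}$, the sum being over the dependent hyperplanes $H_i$ of $M$ (those of size at least $r$); and $h_r\ge 0$ because matroid complexes are shellable.

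Next I would reduce the problem to a covering question about monomials. Work with $k:=n-r$ variables. For $i<r$ the value $h_i$ is exactly the number of monomials of degree $i$ in $k$ variables, so every multicomplex realising the $h$-vector must contain all of them; in particular it uses precisely $k$ variables and contains every monomial of degree $r-1$. If $h_r=0$, the multicomplex of all monomials of degree at most $r-1$ in these $k$ variables has the right $h$-vector and is pure (every maximal element has degree $r-1$). If $h_r>0$, one must in addition select a set $S$ of $h_r$ monomials of degree $r$; the union is a pure multicomplex of top degree $r$ exactly when every degree-$(r-1)$ monomial divides some element of $S$ (otherwise that monomial would be a maximal element of degree $r-1<r$). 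So the entire problem becomes: \emph{find a family $S$ of exactly $h_r$ monomials of degree $r$ in $k$ variables that covers, by divisibility, all $\binom{k+r-2}{r-1}$ monomials of degree $r-1$.}

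Producing $S$ is the crux. Enlarging a covering family keeps it covering, and there are only $\binom{n-1}{r}\ge h_r$ monomials of degree $r$, so it suffices to exhibit \emph{some} covering family of size at most $h_r$ and then pad it up to size $h_r$. Writing $\tau=\tau(k,r)$ for the minimum size of a covering family, the claim is the purely combinatorial statement that a paving matroid never has $0<h_r<\tau$. One side is easy: a degree-$r$ monomial has at most $\min(k,r)$ divisors of degree $r-1$, which gives workable upper bounds on $\tau$ (for instance $\tau(k,2)=\lceil k/2\rceil$). For the other side, $h_r$ small and positive forces $t=\sum_i\binom{|H_i|}{r}$ to lie just below $\binom{n-1}{r}$, and the compatibility condition $|H_i\cap H_j|\le r-1$ severely limits how this can occur. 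The plan is a dichotomy: a paving matroid has only boundedly many ``large'' dependent hyperplanes (those of size close to $n$), for which $S$ can be built explicitly — for a hyperplane of size $s$, delete the $\binom{s}{r}$ degree-$r$ monomials supported on a block of $s-r+1$ previously unused variables — whereas the ``small'' hyperplanes contribute too little to $t$ to push $h_r$ below $\tau$. Making this dichotomy precise, i.e.\ quantifying exactly how the intersection condition caps $\sum_i\binom{|H_i|}{r}$ near its maximum and verifying the block constructions remain covering, is the main obstacle.

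Finally I would record a second, more self-contained proof in the spirit of the block construction above, phrased directly in terms of the dependent hyperplanes and bypassing the abstract covering number $\tau$; besides streamlining the argument this also feeds naturally into the lower bounds for the number of bases of a paving matroid discussed in the abstract.
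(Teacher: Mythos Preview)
Your setup matches the paper exactly: you correctly compute the $h$-vector, reduce to the covering inequality $h_r\ge\tau(k,r)$ (the paper writes $f(r,n-r)$ for your $\tau$), and observe that padding a minimum cover up to size $h_r$ finishes the job. The gap is in proving that inequality. Your proposed dichotomy on large versus small dependent hyperplanes is precisely the ``direct approach'' the authors say in the introduction they tried first and \emph{abandoned}: they could not make the intersection constraint $|H_i\cap H_j|\le r-1$ yield a sharp enough upper bound on $\sum_i\binom{|H_i|}{r}$, and you explicitly flag the same obstacle without resolving it. So as written the proposal is incomplete at exactly the point the paper found hard.

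The paper's actual argument sidesteps the hyperplane analysis entirely. The idea you are missing is a \emph{matching pair of recursions}. On the multicomplex side, $f(r,d)\le f(r,d-1)+f(r-1,d)$: take an optimal degree-$r$ cover in $d-1$ variables and an optimal degree-$(r-1)$ cover in $d$ variables, multiply the latter through by $z_d$, and take the union. On the matroid side, if $M\setminus e$ is coloopless for some $e$ (both $M\setminus e$ and $M/e$ remain paving), then $b(M)=b(M\setminus e)+b(M/e)$ gives $g(r,n)\ge g(r,n-1)+g(r-1,n-1)$. Induction on $r+n$ then yields $g(r,n)\ge f(r,n-r)$; the residual case where \emph{every} single-element deletion produces a coloop is dispatched by a short structural lemma (such $M$ must be $U_{r,r+1}$, $U_{1,2}\oplus U_{1,2}$, or a $2$-stretching of some $U_{s,s+2}$) and checked by hand. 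The paper's second proof is also not the block construction you sketch: it bounds $h_r$ below by the Brown--Colbourn alternating sum $S(r,n)$ for connected matroids, and then shows $f(r,n-r)\le S(r,n)$ via the same additive recursion. Neither proof ever decomposes $t$ over the dependent hyperplanes.
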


No progress was made on this conjecture for some considerable time.
  But in
 1997,
 work of Norman Biggs~\cite{biggs:chip-firing,biggs:tutte-growth} together with~\cite{merino:chip-firing-Tutte} implicitly proved
 Conjecture~\ref{conj:Stanley} for cographic matroids.
 For an explicit exposition see~\cite{merino:chip-firing-matroid-complexes}. More recently, the
 conjecture was proved for
 rank two matroids in~\cite{stokes}, for lattice-path matroids
 in~\cite{schweig}, for cotransversal matroids in~\cite{oh} and most recently for rank three matroids in~\cite{ha+stokes+zanello:O-sequences}.

 A \emph{paving matroid} is one in which all circuits have size at least
 $r(M)$. Interest in paving matroids goes back to 1976 when
 Dominic Welsh~\cite{welsh:matroid} asked if most matroids are paving. This question
 was motivated by numerical results obtained in~\cite{blackburn+crapo+higgs:catalogue}, where a catalogue of all matroids with up to eight elements was presented. The numerical
 data was updated in~\cite{mayhew+royle:9} to include matroids with nine elements, and the results made the problem even
 more intriguing.  More
 recently, the
 authors of~\cite{mayhew+newman+welsh+whittle} conjecture that asymptotically
 almost every matroid is paving, that is, the proportion of $n$-element matroids which are paving tends to one as $n$ tends to infinity.

 In this work we give a proof that coloopless paving matroids satisfy
 Conjecture~\ref{conj:Stanley}. Should paving matroids genuinely form a significant proportion of all matroids, then our
 result will be of a different kind from all the previous work on Conjecture~\ref{conj:Stanley}, as all previous work only considers classes of matroids whose size is insignificant compared with the total number of matroids.

 This article is organized as follows. In Section~\ref{sec:prelim} we
 give definitions and basic properties of matroids, $h$-vectors and $O$-sequences. In the next section we prove Stanley's conjecture for
 paving matroids.
 The direct approach to
 Stanley's conjecture is to attempt to get a good bounds on the number of bases of a paving matroid in terms of its number of elements and rank and on the minimum number of elements in a pure multicomplex of degree $r$ in $d$ indeterminates which contains every monomial of degree $r-1$. This was our original approach to the problem but we were unable to obtain good enough explicit bounds. However, there appear to be some intriguing open questions concerning these problems including potential links with various other well-studied combinatorial objects.
A subclass of paving matroids, namely sparse paving matroids, was introduced by Jerrum in~\cite{jerrum:balanced} and has recently received attention in~\cite{mayhew+royle:9}. 
 In Section~\ref{sec:bounds} we obtain a good lower bound for the number of bases of a sparse paving in terms of the rank $r$ and number $n$ of elements. We have examples showing that this bound is tight for infinitely many values of $r$ and $n$.
We then move on to consider bounds on the sizes of pure multicomplexes of degree $r$ in $d$ indeterminates which contains every monomial of degree $r-1$ and conjecture a link with the number of aperiodic binary necklaces.
The last section
 contains our conclusions.


\section{Preliminaries}
\label{sec:prelim}
In this section we introduce some definitions and key properties of shellable complexes
 and matroids. We assume some  familiarity with matroid theory.
 For an excellent exposition of
 shellability of matroid complexes see~\cite{bjorner:shellability} and for matroids
 see~\cite{oxley:book}.

 \subsection{$h$-vectors}

 Let $\Delta$ be a  \emph{simplicial complex} on the vertex set $V$ = $\{
 x_1,\ldots, x_n\}$. Thus,  $\Delta$ is a collection of subsets of $V$
 such that for all $i$, $\{x_i\}\in\Delta$, and if $F\in \Delta$
 and $F'\subseteq F$, then $F'\in \Delta$. The subsets in $\Delta$ are
 called \emph{faces} and the \emph{dimension} of a face with $i+1$ elements is
 $i$. The \emph{dimension} of $\Delta$ is the maximum dimension of a
 face in $\Delta$.  Associated with $\Delta$  we have
 its \emph{face vector}  or \emph{$f$-vector} $(f_0,f_1, \ldots,
 f_d)$,  where
 $f_i$ is the number of faces of size $i$ (or dimension $i-1$) in  $\Delta$.
 The \emph{face enumerator} is the generating function of the entries of the $f$-vector, defined by
 \[
     f_{\Delta}(x) = \sum_{i=0}^{d}{f_i x^{d-i}}.
 \]
 The maximal faces of $\Delta$ are called \emph{facets}.
When all these have the
 same cardinality $\Delta$ is said to be \emph{pure}.
 From now on we will only consider pure $d-1$-dimensional simplicial complexes.

Given a linear ordering $F_1, F_2, \ldots, F_t$ of the facets of a simplicial complex $\Delta$,
let $\Delta_i$ denote the subcomplex generated by the facets $F_1, F_2, \ldots, F_i$, that is, $F\in \Delta_i$ if and only if $F\in \Delta$ and $F\subseteq F_j$ for some $j$ with $1\leq j\leq i$.

For a pure simplicial complex $\Delta$ a \emph{shelling} is a linear order
of the facets $F_1, F_2, \ldots, F_t$ such that, for $2 \leq l
\leq t$,
\[ \{ F: F\subseteq F_l \text{ and } F\in \Delta_{l-1}\}\]
forms a pure $(\dim(\Delta)-1)$-dimensional simplicial complex. A complex is said to be \emph{shellable} if it is pure and admits a shelling.

Define, for $1 \leq l \leq t$, $R(F_l)= \{ x \in F_l\, |\,\ F_l\setminus
x \in \Delta_{l-1} \}$, where here $\Delta_0=\emptyset$. The number
of facets such that $|R(F_l)|=i$ is denoted by
$h_i$ and importantly does not depend on the particular shelling, see~\cite{bjorner:shellability}. The vector $(h_0, h_1, \ldots, h_d)$  is called
the $h$-{\it vector} of $\Delta$.
The \emph{shelling
polynomial} is the generating function of the entries of the $h$-vector, given by
 \[
     h_{\Delta}(x) = \sum_{i=0}^{d}{h_i x^{d-i}}.
 \]
  It is well known, see for example~\cite{bjorner:shellability}, that the
  face enumerator and the shelling polynomial satisfy the relation
  \[
          h_{\Delta}(x+1) =  f_{\Delta}(x)
  \]
  and so the coefficients satisfy
  \begin{align}\nonumber
     f_k &= \sum_{i=0}^{k}{h_i \binom{d-i}{k-i}}\\
\intertext{and}
  \label{eq:hk=fk}
     h_k &= \sum_{i=0}^{k}{(-1)^{i+k}f_i \binom{d-i}{k-i}},
  \end{align}
  for $0\leq k \leq d$.
\subsection{Matroids and their complexes}

 A \emph{matroid}  is an ordered pair $M$=($E$, $\mathcal{I}$) such that $E$
 is a finite set and $\mathcal{I}$ is a collection of subsets of $E$
 satisfying the following three conditions:
 \begin{enumerate}
 \item $\emptyset \in \mathcal{I}$;
 \item if $I\in \mathcal{I}$ and $I'\subseteq I$, then $I'\in
 \mathcal{I}$;
 \item if $I_1$ and $I_2$ are in $\mathcal{I}$ and
 $|I_1|<|I_2|$, then there is an element $e \in I_2\setminus I_1$
 such that $I_1\cup \{e\}\in \mathcal{I}$.
 \end{enumerate}

 Maximal independent sets
 are called \emph{bases} and it follows easily from the conditions above that all bases have the same
 cardinality. This common cardinality is called the \emph{rank} of the matroid and is usually
 denoted by $r(M)$ or just $r$.

 One fundamental example is the class of \emph{uniform matroids}. The
 uniform matroid with rank $r$ and $n$ elements is denoted by $U_{r,n}$. A set of its elements is independent if and only if it has size at most $r$.

    We recall some basic definitions of matroid theory. A
minimal subset $C$ of $E$ that is not independent is called a \emph{circuit}. The
\emph{closure} $\overline{A}$ of a subset $A$ of $E$ is defined by
\[
    \overline{A}=A\cup \{a\,|\, M\text{ has a circuit } C \text{ such that }
                         a\in C\subseteq A\cup\{a\}\}.
\]
  A subset $S$ is \emph{spanning} if $\overline{S}=E$. A subset $H$
  is a \emph{hyperplane} if is a maximal non-spanning set. For all the
  other concepts of matroid theory we refer the reader to Oxley's
  book~\cite{oxley:book}.

 If $M$=($E$, $\mathcal{I}$) is a matroid, the family of all
 independent sets forms a simplicial complex of dimension
 $r(M)-1$, which we denote by $\Delta(M)$. The facets of $\Delta(M)$
 are the  bases of $M$  and therefore $\Delta(M)$ is
 pure. Complexes of this kind are called  {\it matroid complexes}.
 Matroid complexes are known to be shellable, see
 \cite{bjorner:shellability}.


\emph{Loops} of a matroid are circuits of size one and consequently do not belong to any independent set.
Consequently they do not play any role in $\Delta(M)$ and so to investigate Conjecture~\ref{conj:Stanley}, we can
 safely just consider loopless matroids.

 Furthermore, \emph{coloops} of a matroid are elements
 contained in every basis. Equivalently, they belong to every facet of $\Delta(M)$.
 Suppose $M$ is formed from $M'$ by deleting a coloop. Then $r(M)=r(M')-1$ but more pertinently
  if the  $h$-vector of $M$ is $(h_0,
 h_1, \ldots, h_r)$, then the $h$-vector of $M'$ is $(h_0,
 h_1, \ldots, h_r, 0)$. Thus, all the relevant information concerning the $h$-vector of a matroid
 can still be obtained after
 deleting all its coloops. Consequently, for our purposes we only need to consider coloopless matroids.

\subsection{Pure O-sequences}
\label{multi}



  An explicit characterization of O-sequences can be
  found in \cite{stanley:cohen-macaulay}.
  However, a complete characterization is not known for pure
  O-sequences but Hibi~\cite{hibi} has shown that a pure
  O-sequence $(h_0, h_1, \ldots, h_d)$ must satisfy the following
  conditions:
  \begin{align}
           h_0&\leq h_1 \leq \cdots \leq h_{[d/2]}\label{eq:oseq1}\\
\intertext{and}
\label{eq:oseq2}
      h_i&\leq h_{d-i},  \text{ whenever } 0 \leq i \leq [d/2].
   \end{align}
   Hibi also conjectured that the $h$-vector of a matroid complex must
  satisfy inequalities~\eqref{eq:oseq1} and~\eqref{eq:oseq2}.

The following result concerning the $h$-vector of a matroid complex
  is due to Brown and Colbourn~\cite{brown+colbourn}.
    \begin{theorem}\label{th:Brown-Colbourn}
      The $h$-vector of a connected rank-d matroid satisfies the
      following inequalities:
      \begin{equation}
        \label{eq:bc}
          (-1)^j\sum_{i=0}^{j}{ (-b)^{i}h_i} \geq 0,\qquad 0\leq j \leq d,
       \end{equation}
      for any real number $b \geq 1$ with equality possible
      only if b=1.
    \end{theorem}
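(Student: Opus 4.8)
The plan is to translate \eqref{eq:bc} into a statement about the Tutte polynomial and then attack it by deletion--contraction. The starting point is the well-known identity $h_{\Delta(M)}(x)=T_M(x,1)$, so that if we write $g_M(q)=\sum_{i=0}^{d}h_i q^i$ for the reversed $h$-polynomial, a short computation gives the generating-function identity $\sum_{j\ge 0}\big((-1)^j\sum_{i=0}^{j}(-b)^i h_i\big)t^j=\frac{g_M(bt)}{1+t}$. Thus \eqref{eq:bc} is exactly the assertion that the first $d+1$ coefficients of the power series $\frac{g_M(bt)}{1+t}$ are nonnegative (the later coefficients merely repeat $\pm$ the top one and need not be). Up to the factor $p^{\,n-d}$ and the substitution $q=1-p$, the polynomial $g_M(q)$ is the reliability polynomial of the dual matroid $M^{*}$, which is the form in which Brown and Colbourn originally worked.

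Write $P^N_j(b)$ for the quantity in \eqref{eq:bc} attached to a matroid $N$. Two recursions drive the argument. First, for $e$ neither a loop nor a coloop of $M$, deletion--contraction $T_M(x,1)=T_{M\setminus e}(x,1)+T_{M/e}(x,1)$ yields $g_M=g_{M\setminus e}+q\,g_{M/e}$ and hence $P^M_j(b)=P^{M\setminus e}_j(b)+b\,P^{M/e}_{j-1}(b)$. Second, $T_{M_1\oplus M_2}=T_{M_1}T_{M_2}$ gives $g_{M_1\oplus M_2}=g_{M_1}g_{M_2}$, from which $\sum_j P^{M_1\oplus M_2}_j(b)t^j=(1+t)\big(\sum_j P^{M_1}_j(b)t^j\big)\big(\sum_j P^{M_2}_j(b)t^j\big)$, so that nonnegativity of all the $P_j$ is preserved under direct sums. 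Since a loop or a coloop contributes only the trivial factor $1$ to $g$, loops may be discarded outright and coloops may be stripped off as well, the one caveat being that stripping coloops lowers the rank, so that the $P_j$ with $j$ above the stripped rank must still be accounted for. In a connected matroid with at least two elements every element is both a non-loop and a non-coloop, so the first recursion is always available, $M\setminus e$ and $M/e$ have strictly fewer elements, and, by a standard result on matroid connectivity, at least one of $M\setminus e$, $M/e$ is again connected.

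The induction then runs on $|E(M)|$, with trivial base cases (rank $\le 1$), and, for the strict part, with the refined hypothesis that $P^N_j(b)>0$ for every connected $N$ with at least two elements, every $0\le j\le r(N)$ and every $b>1$. The place where real work is needed — and the main obstacle — is the possibly disconnected minor. Suppose $M/e$ is connected but $M\setminus e$ is not; decompose $M\setminus e$ into connected components, discard the one-element components (loops and coloops, contributing nothing to $g_{M\setminus e}$), and apply the inductive hypothesis together with the direct-sum rule to the remaining non-trivial components. This controls $P^{M\setminus e}_j(b)$ only for $j$ up to the ``effective rank'' $r(M\setminus e)$ minus the number of stripped coloops; for larger $j$ one has $P^{M\setminus e}_j(b)=\pm P^{M\setminus e}_{\text{eff}}(b)$, which oscillates in sign, and on the bad indices one must prove the quantitative estimate $b\,P^{M/e}_{j-1}(b)\ge -P^{M\setminus e}_j(b)$ rather than a mere sign statement — essentially that the contribution of the connected minor dominates the alternating tail produced by the coloop padding of the other minor. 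This domination is the heart of the proof, and it is exactly where connectedness of $M$ is genuinely used: without it \eqref{eq:bc} simply fails (for a single coloop $P_1(b)=-1$). The strictness clause comes along for free once the refined hypothesis is in place, since for $b>1$ the polynomial $\sum_j P^N_j(b)t^j$ attached to a connected $N$ has strictly positive coefficients up to degree $r(N)$, and any equality in \eqref{eq:bc} is traced back through the recursions to the value $b=1$.
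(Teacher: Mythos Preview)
The paper does not prove Theorem~\ref{th:Brown-Colbourn}; it is quoted as a result of Brown and Colbourn with a citation to~\cite{brown+colbourn} and used only as background. There is therefore no ``paper's own proof'' to compare your proposal against.

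On the substance of your proposal: the translation into the reversed $h$-polynomial $g_M$, the generating-function identity $\sum_{j}P^M_j(b)t^j=g_M(bt)/(1+t)$, the deletion--contraction recursion $g_M=g_{M\setminus e}+q\,g_{M/e}$, and the multiplicativity under direct sums are all correct, and this is indeed the framework Brown and Colbourn used (for reliability of graphs; the matroid statement is the natural extension). You have also correctly located the crux: when, say, $M\setminus e$ is disconnected and sheds coloops, the induction only controls $P^{M\setminus e}_j$ up to an effective rank strictly below $r(M)$, and for the remaining indices you need the quantitative domination $b\,P^{M/e}_{j-1}(b)\ge -P^{M\setminus e}_j(b)$.

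The gap is that you do not actually establish this domination; you only assert that it ``must be proved.'' As you yourself say, this is the heart of the argument, and without it the proposal is a roadmap rather than a proof. Concretely, you would need to show that the strictly positive $P^{M/e}_{j-1}(b)$ (for $b>1$, from the refined inductive hypothesis on the connected minor) is at least as large in absolute value as the alternating tail $\pm P^{M\setminus e}_{\mathrm{eff}}(b)$ produced by the coloop padding. This requires a genuine inequality relating the $h$-vectors of the two minors, not merely sign information, and it is where the argument earns its keep. Until that step is written out, the proof is incomplete.
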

   This theorem shows that the converse of Stanley's conjecture is not true because the sequence $(1,4,2)$ is a pure $O$-sequence but does not satisfy the conditions of the theorem.

 Later, Chari~\cite{chari:decompositions} gave a stronger result that generalizes Theorem~\ref{th:Brown-Colbourn}
 and solves Hibi's conjecture.
  The fact that the $h$-vector of a coloop free matroid satisfies
  inequalities~\eqref{eq:oseq1}--\eqref{eq:bc} can also be proved~\cite{chari:inequalities}
  using the Tutte polynomial.

\section{Stanley's conjecture for paving matroids}
\label{sec:Stanleyconj}
A paving matroid $M=(E,\mathcal{I})$ is a matroid whose circuits all have size
at least $r(M)$. If $M$ is a rank-$r$ paving matroid, the face vector of $\Delta(M)$
 is easy to compute. Every subset of size $i<r$ is a face of
 $\Delta(M)$ and the  facets are the bases of $M$. Then, we get the
 following result which is implicit in~\cite{bjorner:shellability}.

 \begin{proposition}
  The $h$-vector of a rank-$r$ paving matroid with $n$ elements and
  $b(M)$ bases is $(h_0,\ldots,h_r)$ where $h_k=\binom{n-r+k-1}k$
  for $0\leq k\leq  r-1$ and $h_r=b(M)-\binom{n-1}{r-1}$.
 \end{proposition}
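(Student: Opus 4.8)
The plan is to compute the $f$-vector of $\Delta(M)$ explicitly and then invert it using relation~\eqref{eq:hk=fk}. Since $M$ is a rank-$r$ paving matroid, every circuit has size at least $r$, so every subset of $E$ of size at most $r-1$ is independent; hence $f_i=\binom{n}{i}$ for $0\le i\le r-1$. The faces of size $r$ are exactly the bases, so $f_r=b(M)$. Thus $d=r$ in the indexing of~\eqref{eq:hk=fk}, and
\[
  h_k=\sum_{i=0}^{k}(-1)^{i+k}f_i\binom{r-i}{k-i},\qquad 0\le k\le r .
\]

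For $k\le r-1$ only the entries $f_0,\dots,f_{r-1}$ occur, all equal to $\binom{n}{i}$, so the computation reduces to a binomial-coefficient identity not involving $M$ at all. First I would write $h_k=(-1)^k\sum_{i=0}^{k}(-1)^i\binom{n}{i}\binom{r-i}{k-i}$ and apply the convolution identity $\sum_{i=0}^{k}(-1)^i\binom{n}{i}\binom{m-i}{k-i}=\binom{m-n}{k}$ with $m=r$, which itself follows from a short generating-function argument (or induction on $k$ via Pascal's rule). This gives $h_k=(-1)^k\binom{r-n}{k}$, and the upper-negation identity $\binom{r-n}{k}=(-1)^k\binom{n-r+k-1}{k}$ then yields $h_k=\binom{n-r+k-1}{k}$.

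The term $k=r$ must be treated separately, since there $f_r=b(M)$ rather than $\binom{n}{r}$; this is the only point that needs any care. As $\binom{r-i}{r-i}=1$ for every $i$, we get $h_r=\sum_{i=0}^{r}(-1)^{i+r}f_i=b(M)+(-1)^r\sum_{i=0}^{r-1}(-1)^i\binom{n}{i}$, and the partial alternating sum telescopes to $\sum_{i=0}^{r-1}(-1)^i\binom{n}{i}=(-1)^{r-1}\binom{n-1}{r-1}$, so $h_r=b(M)-\binom{n-1}{r-1}$. Equivalently, one can observe that the first $r$ entries of the $h$-vector agree with those of the uniform matroid $U_{r,n}$ (whose $h$-vector is obtained by the case above, since $U_{r,n}$ is paving), that changing $f_r$ from $\binom{n}{r}$ to $b(M)$ alters only $h_r$, and by exactly $b(M)-\binom{n}{r}$, and then combine the known value $h_r(U_{r,n})=\binom{n-1}{r}$ with Pascal's identity $\binom{n}{r}=\binom{n-1}{r}+\binom{n-1}{r-1}$ to reach the same conclusion. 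I expect no genuine obstacle: the argument is a routine manipulation of binomial identities, with the $k=r$ boundary being the only spot where the paving hypothesis forces a deviation from the uniform case.
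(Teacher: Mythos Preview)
Your proposal is correct and follows essentially the same approach as the paper: compute $f_i=\binom{n}{i}$ for $i\le r-1$ from the paving hypothesis, invert via~\eqref{eq:hk=fk}, and reduce to standard binomial identities (the paper does upper negation first and then Vandermonde, you do the convolution first and then upper negation, which is the same calculation reordered). The only minor difference is in the $h_r$ step: the paper uses $\sum_{i=0}^{r}h_i=b(M)$ together with the hockey-stick identity $\sum_{i=0}^{r-1}\binom{n-r+i-1}{i}=\binom{n-1}{r-1}$, whereas you evaluate $h_r$ directly from~\eqref{eq:hk=fk} via the partial alternating sum $\sum_{i=0}^{r-1}(-1)^i\binom{n}{i}=(-1)^{r-1}\binom{n-1}{r-1}$; both are equally short and routine.
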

 \begin{proof}
   Using~\eqref{eq:hk=fk} and $f_i=\binom{n}{i}$ for $0\leq i\leq
   r-1$ we see that
  \[
    h_k=\sum_{i=0}^{k}{(-1)^{i+k} \binom{r-i}{k-i}}\binom{n}{i}.
  \]
  for $0\leq k\leq r-1$. Using the identity
  $(-1)^{a}\binom{b}{a}=\binom{a-b-1}{a}$ we  get
  \[
    h_k=\sum_{i=0}^{k}{\binom{k-r-1}{k-i} \binom{n}{i}}.
  \]
 Now using the Vandermonde convolution formula
 $\binom{a+b}{k}=\sum_{i=0}^{k}{\binom{a}{i} \binom{b}{k-i}}$ we get
 \[
   h_k=\binom{n-r+k-1}{k}.
 \]
 Because $\sum_{i=0}^{r}{h_i}=b(M)$, we get
 \[h_r=b(M)-\sum_{i=0}^{r-1}{h_i} = b(M) - \sum_{i=0}^{r-1}{\binom{n-r+i-1}{i}} = b(M) - \binom{n-1}{r-1}.\]
 \end{proof}

 The idea for proving that the $h$-vector of a coloopless
 paving matroid is the O-sequence of a pure multicomplex is simple. We
 define the multicomplex 
 $\mathcal{M}_{r,d}$ to be the pure multicomplex in which the maximal
 elements are all monomials of 
 degree
 $r$ in $d$ indeterminates $z_1,\ldots, z_{d}$. This multicomplex
 has $O$-sequence $(h_0,\ldots,h_{r})$, where $h_k=\binom{d+k-1}k$.

 Now, define the function
\[
      f(r,d)=\min\{ h_r\,|\, (h_0,\ldots h_r)\ \text{is the pure O-sequence
      of} \ \mathcal{M}\supset \mathcal{M}_{r-1,d}\}.
\]
This means that $f(r,d)$ is the minimum number of monomials of degree
$r$ in a pure multicomplex of degree $r$ which contains every monomial
of degree $r-1$ in the $d$ indeterminates $z_1,\ldots,z_d$. 
So for any positive integers $d$ and $r$, if $h_k = \binom{d+k-1}k$
for $0\leq k \leq r-1$ and $f(r,d) \leq h_r \leq \binom{d+r-1}{r}$,
the sequence $(h_0,h_1,\ldots,h_r)$ is a pure $O$-sequence. 

If $M$ is a paving matroid with $n$ elements and rank $r$, then by
taking $d=n-r$, we see that the $h$-vector of $M$ satisfies $h_k =
\binom{d+k-1}k$ for $0\leq k \leq r-1$. To prove Stanley's conjecture
for paving matroids, it will be sufficient to prove that $f(r,d) \leq
h_r \leq \binom{d+r-1}{r}$ or equivalently 
\[f(r,n-r) \leq b(M) - \binom{n-1}{r-1} \leq \binom{n-1}{r}.\]
The second inequality is trivial since $b(M)\leq \binom nr$, so we
focus on the first inequality.  

Some initial
values of $f$ are easy to get.
\begin{lemma}\label{lem:initial_values_f}
  For $r\geq 1$ and $d\geq 1$ we have that $f(1,d)=1$, $f(2,d)=\lceil
  d/2 \rceil$, $f(r,1)=1$ and $f(r,2)=\lceil r/2 \rceil$.
 \end{lemma}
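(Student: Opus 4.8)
The plan is to reinterpret $f(r,d)$ as the optimal value of an elementary covering problem and then to evaluate it directly in each of the four cases. For the reinterpretation: a pure multicomplex $\mathcal{M}$ of degree $r$ in $z_1,\dots,z_d$ with $\mathcal{M}\supseteq\mathcal{M}_{r-1,d}$ is determined by its set $S$ of degree-$r$ monomials, because $\mathcal{M}_{r-1,d}$ is exactly the set of all monomials of degree at most $r-1$, so that $\mathcal{M}=\mathcal{M}_{r-1,d}\cup S$. Purity forces every degree-$(r-1)$ monomial to divide some element of $S$; conversely, for any set $S$ of degree-$r$ monomials with this property, $\mathcal{M}_{r-1,d}\cup S$ is a multicomplex (a divisor of a degree-$r$ monomial has degree at most $r-1$ and hence lies in $\mathcal{M}_{r-1,d}$, or else equals the monomial itself) whose maximal elements all have degree $r$ (an element of degree at most $r-2$ divides a degree-$(r-1)$ monomial, and a degree-$(r-1)$ monomial divides a member of $S$). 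Therefore $f(r,d)$ is the minimum size of a set $S$ of degree-$r$ monomials in $z_1,\dots,z_d$ such that every degree-$(r-1)$ monomial divides some member of $S$. Since a degree-$(r-1)$ monomial $m'$ divides a degree-$r$ monomial $m$ exactly when $m'=m/z_i$ for a variable $z_i$ in the support of $m$, each monomial in $S$ ``covers'' at most $\min(r,d)$ of the degree-$(r-1)$ monomials.

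The two trivial values are then immediate. When $r=1$ the only degree-$0$ monomial is $1$, which divides $z_1$, so $S=\{z_1\}$ is optimal and $f(1,d)=1$. When $d=1$ the only degree-$(r-1)$ monomial is $z_1^{r-1}$, which divides $z_1^{r}$, so $S=\{z_1^{r}\}$ is optimal and $f(r,1)=1$.

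For $f(2,d)$, the degree-$1$ monomials are $z_1,\dots,z_d$ and a degree-$2$ monomial $z_iz_j$ is divided by $z_i$ and $z_j$, so an admissible $S$ is exactly an edge cover (with loops permitted) of the complete graph on $\{1,\dots,d\}$. Each monomial covers at most two of the $d$ variables, whence $f(2,d)\ge\lceil d/2\rceil$, and the set $\{z_1z_2,z_3z_4,\dots\}$, completed by $z_d^{2}$ when $d$ is odd, attains this bound; hence $f(2,d)=\lceil d/2\rceil$.

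For $f(r,2)$, index the degree-$(r-1)$ monomials as $z_1^{r-1-k}z_2^{k}$ with $k\in\{0,\dots,r-1\}$ and the degree-$r$ monomials as $z_1^{r-j}z_2^{j}$ with $j\in\{0,\dots,r\}$. The monomial with index $j$ is divided by precisely the degree-$(r-1)$ monomials with $k\in\{j-1,j\}\cap\{0,\dots,r-1\}$, a block of at most two consecutive indices, so covering the $r$ indices $0,\dots,r-1$ needs at least $\lceil r/2\rceil$ monomials, and the set $\{\,z_1^{r-j}z_2^{j} : j\in\{1,\dots,r\}\ \text{odd}\,\}$ attains it; hence $f(r,2)=\lceil r/2\rceil$. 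In each of these constructions purity holds because, by design, every degree-$(r-1)$ monomial divides a chosen monomial. The only step requiring genuine care is the first, namely checking that imposing purity is equivalent to the covering condition and that the downward closure picks up no spurious maximal monomial of degree below $r$; beyond that bookkeeping I do not anticipate any real obstacle.
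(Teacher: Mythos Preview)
Your argument is correct. The paper gives no proof of this lemma, simply stating that ``some initial values of $f$ are easy to get''; your covering reformulation of $f(r,d)$ is exactly right and in fact agrees with the graph-domination interpretation the paper develops later in Section~\ref{sec:bounds}.
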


 \begin{lemma}\label{lem:recursion_f(p,d)}
   $f(r,d)\leq f(r,d-1)+f(r-1,d)$.
 \end{lemma}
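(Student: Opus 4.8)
The plan is to recast $f(r,d)$ as a purely combinatorial covering number and then glue together optimal solutions to the two sub-instances appearing on the right-hand side.

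First I would record the following reformulation. If $S$ is any set of degree-$r$ monomials in $z_1,\dots,z_d$ with the property that every monomial of degree $r-1$ divides some element of $S$, then the downward closure of $S$ is a multicomplex whose monomials of degree $<r$ are all non-maximal (each divides a member of $S$) and whose degree-$r$ monomials are pairwise non-dividing; hence it is a pure multicomplex of degree $r$ containing $\mathcal{M}_{r-1,d}$. Conversely, in any pure multicomplex of degree $r$ containing $\mathcal{M}_{r-1,d}$, purity forces every degree-$(r-1)$ monomial to be non-maximal, i.e.\ to divide one of the degree-$r$ monomials present. Therefore
\[
 f(r,d) = \min\{\, |S| : S \subseteq \{\text{degree-}r\text{ monomials in } z_1,\dots,z_d\},\ \text{every degree-}(r-1)\text{ monomial divides some element of } S \,\}.
\]

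Next, fix an optimal $S_1$ attaining $f(r,d-1)$ --- a family of $f(r,d-1)$ degree-$r$ monomials in $z_1,\dots,z_{d-1}$ such that every degree-$(r-1)$ monomial in those $d-1$ variables divides a member of $S_1$ --- and an optimal $S_2$ attaining $f(r-1,d)$ --- a family of $f(r-1,d)$ degree-$(r-1)$ monomials in $z_1,\dots,z_d$ such that every degree-$(r-2)$ monomial in all $d$ variables divides a member of $S_2$. Put $S = S_1 \cup \{\, z_d\, m : m \in S_2 \,\}$. Every element of $S$ has degree $r$, and $|S| \le |S_1| + |S_2| = f(r,d-1) + f(r-1,d)$. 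To see that $S$ covers every degree-$(r-1)$ monomial $m$ in $z_1,\dots,z_d$, split on whether $z_d \mid m$: if not, $m$ involves only $z_1,\dots,z_{d-1}$ and so is divided by a member of $S_1$; if so, write $m = z_d\, m'$ with $\deg m' = r-2$, note $m'$ is divided by some $s \in S_2$, and conclude $m = z_d\, m' \mid z_d\, s \in S$. By the reformulation, $f(r,d) \le |S| \le f(r,d-1) + f(r-1,d)$.

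I do not anticipate a serious obstacle here; the one point that needs care is the equivalence in the first paragraph between ``pure multicomplex of degree $r$ containing $\mathcal{M}_{r-1,d}$'' and ``covering family of degree-$r$ monomials'' --- in particular checking that purity is exactly the covering condition and that no degree-$r$ monomial of the constructed $S$ is accidentally non-maximal in the downward closure. One should also read the lemma in the range $r \ge 2$, $d \ge 2$, so that $f(r-1,d)$ and $f(r,d-1)$ are defined; the remaining small cases are already handled by Lemma~\ref{lem:initial_values_f}.
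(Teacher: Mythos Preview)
Your proof is correct and follows essentially the same approach as the paper: both build the witness for $f(r,d)$ as the union of an optimal solution for $(r,d-1)$ with $z_d$ times an optimal solution for $(r-1,d)$, and verify coverage of all degree-$(r-1)$ monomials by splitting on whether $z_d$ appears. Your explicit covering reformulation of $f(r,d)$ is equivalent to (indeed a bit cleaner than) the paper's direct multicomplex construction $\mathcal{M}=\mathcal{M}'\cup z_d\mathcal{M}''$, which implicitly relies on the same purity/covering observation you spell out.
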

 \begin{proof}
Let $\mathcal{M}'$ be a multicomplex in indeterminates
$z_1,\ldots,z_{d-1}$ containing $\mathcal{M}_{r-1,d-1}$, having
$h$-vector $(h_0',\ldots,h_r')$ satisfying $h_r'=f(r,d-1)$. 
Let $\mathcal{M}''$ be a multicomplex in indeterminates
$z_1,\ldots,z_d$ containing $\mathcal{M}_{r-2,d}$, having $h$-vector
$(h_0'',\ldots,h_{r-1}'')$ satisfying $h_r''=f(r-1,d)$.

Consider the multicomplex $\mathcal{M}$ that is the
union of $\mathcal{M}'$ and \[z_d\mathcal{M}''=\{z_d\,m|\,m\in
    \mathcal{M}''\}.\] Then, $\mathcal{M}$ contains
 all
    the monomials over $z_1,\ldots, z_{d-1}$ of degree at most $r-1$ and
    all the monomials over $z_1,\ldots, z_{d}$ of degree at most $r-1$
    where $z_d$ has degree at least 1. These are precisely all the
    monomials over  $z_1,\ldots, z_{d}$ of degree at most
    $r-1$. Therefore $\mathcal{M}$ contains $\mathcal{M}_{r-1,d}$.

    It remains to prove that $\mathcal{M}$ is a multicomplex. Let
    $m\in \mathcal{M}$ and $m'|m$. Then $m'$ is a monomial in
    indeterminates $z_1,\ldots,z_d$ and either $m'=m$ or $m'$ has
    degree at most $r-1$. By using the previous part of the proof, in
    either case we obtain that $m \in \mathcal {M}$. 


    Finally, the O-sequence of  $\mathcal{M}$  is $(h'_0,
    h'_1+h''_0,\ldots, h'_r+h''_{r-1})$.

  \end{proof}

 Let $\mathcal{P}_{r,n}$ be the class of coloopless, loopless rank-$r$ paving
 matroids on $n$ elements. We define
 \[
      g(r,n)=\min\Big\{ b(M)-\binom{n-1}{r-1}\,|\, M\in \mathcal{P}_{r,n}\Big\}.
\]

 Observe that $g(r,n)$ equals the minimum value of $h_r$ among all $h$-vectors
 of matroids in $\mathcal{P}_{r,n}$. Thus, to prove
 Stanley's conjecture for paving matroids is enough to show that
 $g(r,n)\geq f(r,n-r)$.

  \begin{lemma}\label{lem:rank_1}
   For all $n\geq 1$, $g(1,n)\geq f(1,n-1)$.
 \end{lemma}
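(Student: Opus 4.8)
The plan is to pin down exactly which matroids lie in $\mathcal{P}_{1,n}$ and then read off both sides of the asserted inequality. By Lemma~\ref{lem:initial_values_f} we have $f(1,n-1)=1$ whenever $n\geq 2$ (and when $n=1$ there is no pure multicomplex of degree $1$ in $0$ indeterminates, so $f(1,0)$ is a minimum over the empty family and the statement is vacuous in that case). Thus the real content of the lemma is that $g(1,n)\geq 1$, i.e.\ that every matroid $M\in\mathcal{P}_{1,n}$ satisfies $b(M)-\binom{n-1}{0}\geq 1$, equivalently that $M$ has at least two bases.

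Next I would describe $\mathcal{P}_{1,n}$ explicitly. A loopless matroid of rank $1$ has every singleton independent, hence every singleton is a basis, so it must be the uniform matroid $U_{1,n}$. An element $e$ is a coloop precisely when $\{e\}$ is the unique basis, and this happens only when $n=1$. Consequently, for $n\geq 2$ we have $\mathcal{P}_{1,n}=\{U_{1,n}\}$, and $U_{1,n}$ has $b(U_{1,n})=n$ bases.

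Combining these facts, for $n\geq 2$ we get
\[
  g(1,n)=b(U_{1,n})-\binom{n-1}{0}=n-1\geq 1=f(1,n-1),
\]
and the case $n=1$ holds vacuously as noted above. One can also argue uniformly without the case split: any $M\in\mathcal{P}_{1,n}$ is a coloopless matroid of rank $1$, hence cannot have a unique basis, so $b(M)\geq 2$ and therefore $b(M)-\binom{n-1}{0}\geq 1=f(1,n-1)$. There is essentially no obstacle in this lemma; it is a base case for the induction that follows, and the only point needing a little care is the degenerate value $n=1$, where one must check that both $g(1,1)$ and $f(1,0)$ are minima over empty families so that the inequality is trivially satisfied.
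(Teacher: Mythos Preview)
Your proof is correct and follows essentially the same approach as the paper: both identify that the only matroid in $\mathcal{P}_{1,n}$ is $U_{1,n}$, compute $g(1,n)=n-1$, and compare with $f(1,n-1)=1$. You are somewhat more careful than the paper about the degenerate case $n=1$, where $\mathcal{P}_{1,1}$ is empty (since $U_{1,1}$ has a coloop) and the inequality is vacuous; the paper glosses over this.
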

 \begin{proof}
    Up to isomorphism, the only matroid, in $\mathcal{P}_{1,n}$ is
   $U_{1,n}$, thus $g(1,n)=n-1$ and $f(1,n-1)=1$.
  \end{proof}

  \begin{lemma}\label{lem:rank_2}
    For all $n\geq 2$, $g(2,n)\geq f(2,n-2)$.
 \end{lemma}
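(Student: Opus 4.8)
The plan is to exploit the simple structure of loopless rank-$2$ matroids. A rank-$2$ matroid is paving exactly when it is loopless, so a matroid $M\in\mathcal P_{2,n}$ is, up to isomorphism, specified by the sizes $a_1\ge a_2\ge\cdots\ge a_k$ of its parallel classes (the rank-$1$ flats), which form a partition of $n$ into $k\ge 2$ parts. I would first observe that an element $e$ is a coloop precisely when $E\setminus e$ fails to span, i.e.\ when all elements other than $e$ are parallel; a short case check shows this happens if and only if $k=2$ and one of $a_1,a_2$ equals $1$. Hence the admissible partitions --- those corresponding to members of $\mathcal P_{2,n}$ --- are exactly those with $k\ge 3$, together with the $2$-part partitions $(a_1,a_2)$ with $a_1,a_2\ge 2$. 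Two elements form a basis if and only if they lie in different parallel classes, so
\[
  b(M)=\binom n2-\sum_{i=1}^k\binom{a_i}{2},
  \qquad\text{whence}\qquad
  b(M)-\binom{n-1}{1}=\binom{n-1}{2}-\sum_{i=1}^k\binom{a_i}{2}.
\]

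The next step is to minimise the last expression over admissible partitions, equivalently to maximise $\sum_i\binom{a_i}{2}$. Since $x\mapsto\binom x2$ is convex, this sum is monotone along the majorisation order, so its maximum is attained at a majorisation-maximal admissible partition. For $n\ge 4$ the partition $(n-2,2)$ is admissible, and every admissible partition is majorised by it: a partition with $k\ge 3$ parts has largest part at most $n-2$ (the remaining parts account for at least $2$), and a $2$-part admissible partition $(a,n-a)$ with $a\le n-a$ has largest part $n-a\le n-2$; the only partition strictly above $(n-2,2)$, namely $(n-1,1)$, is inadmissible. Therefore, for $n\ge 4$,
\[
  g(2,n)=\binom{n-1}{2}-\binom{n-2}{2}-1=n-3,
\]
while for $n=3$ the only admissible partition is $(1,1,1)$, giving $g(2,3)=\binom 22-0=1$, and for $n=2$ we have $\mathcal P_{2,2}=\emptyset$ so the inequality holds vacuously.

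It remains to compare with $f(2,n-2)$, which equals $\lceil(n-2)/2\rceil$ by Lemma~\ref{lem:initial_values_f}. For $n\ge 4$ the claim reduces to the elementary inequality $n-3\ge\lceil(n-2)/2\rceil$, and for $n=3$ to $1\ge 1$; both are immediate. The only delicate point in the argument is the combinatorial bookkeeping in the minimisation --- correctly pinning down the coloopless condition and disposing of the few small values of $n$ --- rather than any substantial estimate; once the structure of rank-$2$ paving matroids is in hand, everything is a direct computation.
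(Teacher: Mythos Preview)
Your proof is correct and takes a genuinely different route from the paper. The paper argues the bound $b(M)\ge(n-1)+\lceil(n-2)/2\rceil$ by a direct case split: if every element lies in at least three bases one gets $b(M)\ge 3n/2$, and otherwise the matroid has at most three parallel classes, forcing $M$ to be $U_{1,n-2}\oplus U_{1,2}$ or the triangle with one edge thickened, for which the bound is checked explicitly. Your argument instead parametrises $\mathcal P_{2,n}$ by admissible partitions, writes $b(M)-\binom{n-1}{1}=\binom{n-1}{2}-\sum_i\binom{a_i}{2}$, and uses convexity plus majorisation to identify $(n-2,2)$ as the extremiser, obtaining the exact value $g(2,n)=n-3$ for $n\ge4$ before comparing with $f(2,n-2)$. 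This is cleaner and yields more information (the precise value of $g$ rather than just the inequality), at the modest cost of invoking majorisation; the paper's approach is more elementary but also more ad hoc. One small point of presentation: your majorisation check only explicitly verifies $a_1\le n-2$, which is in fact sufficient here since the second partial sum of $(n-2,2)$ is already $n$, but it would not hurt to say so.
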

 \begin{proof}
   It is enough to prove that for any loopless and coloopless rank-2
   paving matroid $M$ with $n$ elements, $b(M)\geq (n-1)+ \lceil (n-2)/2
   \rceil $. If every  element is in at least 3 bases, then $b(M)\geq
   3n/2> 3(n-1)/2\geq (n-1)+ \lceil (n-2)/2 \rceil $.

  Suppose that $M$ has an element $e$ in at most 2 bases.
  If the number of parallel classes in $M$ is at least 4, then every
  element is in at least 3 bases. Thus, $M$ has at most 3 parallel
  classes. If $M$ has two parallel classes, it is not possible that
  both have size at
  least 3, thus, there is a parallel classes of size 2. Therefore, $M$ is
  $U_{1,n-2}\oplus U_{1,2}$, where $n\geq 4$ as $M$ is coloopless, and $b(M)=2(n-2)\geq (n-1)+ \lceil (n-2)/2
  \rceil$, with equality when $n=4$.

  If there are three parallel classes, it is not possible that two
  have size at least 2, or else every element is in at least three bases. Thus there are two parallel classes of size one
  and one of size $n-2$. Therefore, $M$ is isomorphic to
  $U_{1,n-1}\oplus_2 U_{2,3}$, that is, the graphic matroid of a triangle with one edge replaced by $n-2$ parallel edges, and $b(M)=2(n-2)+1\geq 3(n-1)/2\geq
  (n-1)+ \lceil (n-2)/2 \rceil$ with equality when $n=3$ and $M$ is
  $U_{2,3}$.
  \end{proof}

 We use the following result from~\cite{chavez+merino+noble+ibanez}.
   \begin{lemma}\label{lem:2-stretching}
    Let $M$ be a rank-r coloopless paving matroid. If for every element $e$
    of $M$, $M\setminus e$ has a coloop, then one of the following
    three cases happens.
    \begin{enumerate}
    \item $M$ is isomorphic to $U_{r,r+1}$, $r\geq 1$.
    \item $M$ is the 2-stretching of a uniform matroid
       $U_{s,s+2}$, for some $s\geq 1$.
    \item $M$ is isomorphic to $U_{1,2}\oplus
    U_{1,2}$.
    \end{enumerate}
   \end{lemma}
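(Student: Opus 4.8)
\emph{Approach.} The plan is to pass to the dual matroid $M^{*}$, where all three hypotheses become transparent statements about parallel classes and hyperplanes. Write $n=|E|$ and $r=r(M)$, so that $r(M^{*})=n-r$. Using only standard matroid duality (see~\cite{oxley:book}) one gets the following dictionary. First, $M^{*}$ is loopless because $M$ is coloopless, and coloopless because $M$ is loopless. Second, since $(M\setminus e)^{*}=M^{*}/e$ and $M^{*}$ is loopless, $M\setminus e$ has a coloop if and only if $M^{*}/e$ has a loop, which happens exactly when $e$ lies in a parallel class of $M^{*}$ of size at least two; hence the deletion hypothesis is equivalent to: \emph{every} parallel class of $M^{*}$ has size at least two. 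Third, a set $A$ is independent in $M$ if and only if $E\setminus A$ spans $M^{*}$; applying this to the $(r-1)$-subsets of $E$ shows that $M$ is paving if and only if every $(n-r+1)$-subset of $E$ spans $M^{*}$, equivalently every hyperplane of $M^{*}$ has at most $n-r$ elements. So it suffices to classify the matroids $N=M^{*}$ that are loopless, coloopless, have every parallel class of size $\ge 2$, and have every hyperplane of size $\le n-r$.

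\emph{Bounding the rank.} If $n\le r$ then $E$ is independent and every element of $M$ is a coloop, a contradiction, so $n\ge r+1$. If $n=r+1$ then $N$ has rank $1$ and is loopless, hence $N\cong U_{1,n}$ and $M\cong U_{r,r+1}$, which is case~1. So assume $n\ge r+2$, i.e.\ $r(N)=n-r\ge 2$, and I would now show $n=r+2$. The simplification $\overline{N}$ is a simple matroid of rank $n-r\ge 2$ and hence has a hyperplane; pulling it back to $N$ (each hyperplane of $N$ is a union of parallel classes and restricts to a hyperplane of $\overline{N}$) gives a hyperplane $H$ of $N$ that is a union of at least $n-r-1$ parallel classes of $N$. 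Since every parallel class has at least two elements, $|H|\ge 2(n-r-1)$, and comparing with $|H|\le n-r$ forces $n-r\le 2$. Hence $n=r+2$.

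\emph{The case $n=r+2$.} Now $N$ has rank $2$, so its hyperplanes are precisely its rank-$1$ flats, i.e.\ its parallel classes; the paving condition forces each parallel class to have size $\le n-r=2$, while the deletion condition forces size $\ge 2$, so every parallel class of $N$ has size exactly $2$. In particular $n$ is even, and $N$ is obtained from its simplification by replacing each element with a parallel pair; the simplification is a simple rank-$2$ matroid on $k:=n/2$ elements, with $k\ge 2$ since $N$ has rank $2$, hence it is $U_{2,k}$. Dualizing, and using that a parallel extension at an element is dual to a series extension at that element, $M=N^{*}$ is obtained from $U_{2,k}^{*}=U_{k-2,k}$ by performing a series extension at every element, that is, $M$ is the $2$-stretching (in the terminology of~\cite{chavez+merino+noble+ibanez}) of $U_{s,s+2}$ with $s=k-2$. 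If $k\ge 3$ this is case~2. If $k=2$ then $U_{2,2}=U_{1,1}\oplus U_{1,1}$, so $N=U_{1,2}\oplus U_{1,2}$, which is self-dual, whence $M\cong U_{1,2}\oplus U_{1,2}$, which is case~3.

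\emph{Main obstacle.} I expect the only genuine difficulty to be the step $n=r+2$: one must use the paving hypothesis in exactly the dual form ``every hyperplane of $M^{*}$ is small'' and pair it with the lower bound $|H|\ge 2(n-r-1)$ coming from the lifted hyperplane. The remaining steps are routine manipulations with matroid duality, plus a check that the small degenerate values of $r$ are absorbed into case~1 and, as a sanity check, that the matroids in cases~1--3 do satisfy all three hypotheses. The argument can alternatively be run directly on the hyperplanes of $M$: call a hyperplane $H$ \emph{large} if $|E\setminus H|\le 2$; the colooplessness of $M$ rules out $|E\setminus H|=1$, the deletion hypothesis says the sets $E\setminus H$ over large $H$ cover $E$, and the bound $|H_{1}\cap H_{2}|\le r-2$ for distinct hyperplanes then forces these complements to form a partition of $E$ into pairs, which again pins down $M$. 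The dual formulation is preferable only because it makes the final identification of $M$ cleanest.
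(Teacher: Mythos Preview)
The paper does not prove this lemma: it is quoted verbatim from~\cite{chavez+merino+noble+ibanez} with no argument supplied here. So there is nothing to compare against, and your proposal stands as an independent proof.

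Your dual argument is correct and rather clean. Translating ``$M$ paving'' to ``every hyperplane of $M^{*}$ has at most $n-r$ elements'' and ``$M\setminus e$ has a coloop for all $e$'' to ``every parallel class of $M^{*}$ has size at least $2$'' is exactly right, and the inequality $2(n-r-1)\le |H|\le n-r$ coming from any hyperplane of the simplification forces $r(M^{*})\le 2$ in one line. The rank-$2$ endgame (every parallel class has size exactly~$2$, simplification is $U_{2,k}$, dualize back) is routine, and your split into $k\ge 3$ versus $k=2$ matches cases~2 and~3 precisely.

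One small point to tighten: you assert at the outset that $M$ is loopless, but the lemma does not assume it. For $r\ge 2$ this follows from paving; for $r=1$ it does not, yet it still holds, because if $\ell$ were a loop then $M\setminus\ell$ would have the same bases as $M$ and hence, like $M$, be coloopless---contradicting the deletion hypothesis. (Equivalently, in your dual dictionary a loop of $M$ becomes a coloop of $M^{*}$, whose parallel class is a singleton.) Add this one sentence and the argument is airtight.

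The alternative you sketch at the end, working directly with the ``large'' hyperplanes of $M$ and using $|H_{1}\cap H_{2}|\le r-2$, is essentially the same proof undualized; your instinct that the dual version makes the final identification of $M$ cleanest is correct.
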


  \begin{lemma}
    Let $M$ be a rank-r coloopless paving matroid with $n$
    elements. If for every element $e$ of $M$, $M\setminus e$ has a
    coloop, then $b(M)- \binom{n-1}{r-1}\geq f(r,n-r)$
 \end{lemma}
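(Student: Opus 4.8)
The plan is to apply Lemma~\ref{lem:2-stretching}, whose hypothesis is exactly the one assumed here, and which tells us that $M$ is one of three explicitly described matroids; I would then check the desired bound --- which will in fact turn out to be an equality --- in each of the three cases, using the values of $f$ recorded in Lemma~\ref{lem:initial_values_f}.

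First I would treat the two sporadic cases. If $M\cong U_{r,r+1}$ then $n=r+1$, so $n-r=1$; here $b(M)=\binom{r+1}{r}=r+1$ and $\binom{n-1}{r-1}=\binom{r}{r-1}=r$, whence $b(M)-\binom{n-1}{r-1}=1=f(r,1)$. If $M\cong U_{1,2}\oplus U_{1,2}$ then $r=2$, $n=4$, $b(M)=4$ and $\binom{n-1}{r-1}=\binom{3}{1}=3$, so $b(M)-\binom{n-1}{r-1}=1=\lceil 2/2\rceil=f(2,2)$. In both cases the inequality holds, with equality.

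The remaining case is where the work lies. Suppose $M$ is the $2$-stretching of $U_{s,s+2}$ with $s\geq 1$, so that each of the $s+2$ elements of $U_{s,s+2}$ is replaced by a series pair; then $M$ has rank $r=2s+2$ and $n=2s+4$ elements, and $n-r=2$. To count the bases of $M$ I would use that each series pair $\{e',e''\}$ is a cocircuit of $M$, so every basis meets it; since $|B|=2s+2$ while there are $s+2$ pairs, each contributing one or two elements of a basis $B$, it follows that $B$ contains both elements of exactly $s$ of the pairs and exactly one element of each of the other two. A short check of independence and spanning then shows that the $s$ pairs contributing two elements can be any $s$-subset of the $s+2$ pairs (because every $s$-subset of $U_{s,s+2}$ is a basis), and that the two remaining single representatives can be chosen independently, so
\[
   b(M)=\binom{s+2}{s}\cdot 2^{2}=2(s+1)(s+2).
\]
Since $\binom{n-1}{r-1}=\binom{2s+3}{2}=(2s+3)(s+1)$, we obtain $b(M)-\binom{n-1}{r-1}=(s+1)\bigl(2(s+2)-(2s+3)\bigr)=s+1$, which equals $\lceil(2s+2)/2\rceil=f(2s+2,2)=f(r,n-r)$. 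So once more the bound holds, with equality.

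The step I expect to be the main obstacle is the base count in the $2$-stretching case --- specifically, verifying that the sets described above really are exactly the bases of $M$ (the independence and spanning claims). This can be handled directly from the properties of series classes, or by tracking the effect on the number of bases of the successive series extensions that build up the $2$-stretching. Everything else reduces to elementary manipulation of binomial coefficients together with the explicit small values of $f$ supplied by Lemma~\ref{lem:initial_values_f}.
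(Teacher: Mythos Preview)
Your proposal is correct and follows essentially the same approach as the paper: invoke Lemma~\ref{lem:2-stretching} and verify the inequality (in fact equality) in each of the three cases using the values of $f$ from Lemma~\ref{lem:initial_values_f}. The only cosmetic differences are that the paper simply asserts the base count $b(M)=2(s+2)(s+1)$ for the $2$-stretching without your detailed justification, and it disposes of the $U_{1,2}\oplus U_{1,2}$ case by pointing back to the rank-$2$ lemma rather than computing directly.
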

 \begin{proof}
    It follows from the previous lemma that we just have to check
    three cases. If $M\cong U_{r,r+1}$, $b(M)=r+1=\binom r{r-1}+ f(r,1)$. In this case we have equality.

    If the matroid $M$  is the
   2-stretching of $U_{s,s+2}$, it has rank $2s+2$, $2s+4$ elements and
   $2(s+2)(s+1)$  bases, thus $b(M)-\binom{2s+3}{2s+1}=s+1$. On the
   other hand, $f(2s+2,2)=s+1$, and we have equality.

Finally, if
    $M\cong U_{1,2}\oplus U_{1,2}$, then $M$ has rank 2 and has
    already been considered in Lemma~\ref{lem:rank_2}.
  \end{proof}

 \begin{theorem}\label{thm:main}
   For $r\leq n$ we have $g(r,n)\geq f(r,n-r)$.
 \end{theorem}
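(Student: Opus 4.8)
The plan is to argue by strong induction on $n$, reducing to smaller matroids by deleting and contracting a single element. Fix $M \in \mathcal{P}_{r,n}$; we must show $b(M) - \binom{n-1}{r-1} \ge f(r, n-r)$. If $r \le 2$ this is precisely Lemma~\ref{lem:rank_1} (when $r=1$) or Lemma~\ref{lem:rank_2} (when $r=2$), which in particular covers the base cases of the induction, so we may assume $r \ge 3$. If $M \setminus e$ has a coloop for \emph{every} element $e$, the conclusion is exactly the statement of the lemma immediately preceding this theorem. Hence we may assume that there is an element $e$ for which $M \setminus e$ is coloopless; this is the element we will delete and contract.

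First I would record that $e$ is neither a loop nor a coloop of $M$ (since $M$ is loopless and coloopless by hypothesis), so $r(M\setminus e) = r$, $r(M/e) = r-1$, and the usual basis count gives $b(M) = b(M\setminus e) + b(M/e)$. Then I would check that both minors stay in the relevant classes. For $M \setminus e$: its circuits are the circuits of $M$ avoiding $e$, all of size $\ge r = r(M\setminus e)$, and it is loopless, so $M\setminus e \in \mathcal{P}_{r,n-1}$. For $M/e$: it is loopless, because a loop of $M/e$ would be an element parallel to $e$ in $M$, hence a circuit of size $2 < r$, which is impossible since $M$ is paving of rank $r \ge 3$ — this is the only place the hypothesis $r \ge 3$ is used, and the reason the cases $r \le 2$ are peeled off first; it is coloopless, because an element $f$ is a coloop of $M/e$ exactly when $r_M(E\setminus f) = r-1$, i.e. exactly when $f$ is a coloop of $M$; and it is paving, because a circuit $D$ of $M/e$ with $|D| \le r-2$ would force $D \cup \{e\}$ to be a dependent set of $M$ of size at most $r-1$, contradicting that $M$ is paving. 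Thus $M/e \in \mathcal{P}_{r-1,n-1}$.

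To finish, I would note that $M\setminus e$, being coloopless of rank $r$ on $n-1$ elements, is not $U_{r,r}$, so $n-1 \ge r+1$ and $d := n-r \ge 2$. Using Pascal's identity $\binom{n-1}{r-1} = \binom{n-2}{r-1} + \binom{n-2}{r-2}$,
\[
  b(M) - \binom{n-1}{r-1}
  = \Bigl(b(M\setminus e) - \binom{n-2}{r-1}\Bigr) + \Bigl(b(M/e) - \binom{n-2}{r-2}\Bigr).
\]
The induction hypothesis, applied to $M\setminus e \in \mathcal{P}_{r,n-1}$ and to $M/e \in \mathcal{P}_{r-1,n-1}$, bounds the two bracketed quantities below by $g(r,n-1) \ge f(r,n-1-r)$ and $g(r-1,n-1) \ge f(r-1,n-r)$ respectively, whence
\[
  b(M) - \binom{n-1}{r-1} \ge f(r,n-r-1) + f(r-1,n-r) \ge f(r,n-r),
\]
the last inequality being Lemma~\ref{lem:recursion_f(p,d)} with $d = n-r$. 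The hard part is not any single estimate but the bookkeeping of the previous paragraph: ensuring that deletion and, especially, contraction keep us inside the class of loopless, coloopless paving matroids, since contraction of a paving matroid could in principle create loops, and it is precisely the reduction to rank at least $3$ that rules this out.
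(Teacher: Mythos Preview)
Your proof is correct and follows essentially the same route as the paper: induction (you on $n$, the paper on $r+n$, which is equivalent here since both minors live on $n-1$ elements), the deletion--contraction split of $b(M)$ against the Pascal split of $\binom{n-1}{r-1}$, and Lemma~\ref{lem:recursion_f(p,d)} to recombine. The paper's version simply asserts the chain of inequalities, whereas you supply the verification it omits---that $M\setminus e$ and $M/e$ remain loopless, coloopless, paving matroids of the right rank---and you correctly isolate $r\ge 3$ as what prevents contraction from creating loops, and the cooplessness of $M\setminus e$ as what guarantees $n-r\ge 2$ so that the recursion for $f$ applies.
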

  \begin{proof}
    We prove the statement by induction on $r+n$. If
    $r\leq 2$ and $n\geq r$, the result follows by Lemmas~\ref{lem:rank_1}
    and~\ref{lem:rank_2}. Suppose that the statement is true for all $r'$ and $n'$ with $r'+n'<r+n$.

    Let $n\geq r>2$ and $M$ be a matroid in $\mathcal{P}_{r,n}$ such that
    $b(M)=\binom{n-1}{r-1}+g(r,n)$. Suppose that $M\setminus e$ has
    no coloops for some $e\in E(M)$. Then
    \begin{align*}
       g(r,n)&=b(M)- \binom{n-1}{r-1}
             =b(M\setminus e)-\binom{n-2}{r-1}+b(M/e)-\binom{n-2}{r-2}\\
             &\geq g(r,n-1)+g(r-1,n-1)\geq f(r,n-r-1)+f(r-1, n-r)\\&\geq f(r,n-r).
    \end{align*}
     If $M$ has no such element $e$ then the result follows by
     Lemma~\ref{lem:2-stretching}.
  \end{proof}

 \begin{corollary}\label{Stanley_Paving}
     The $h$-vector of the matroid complex of a paving matroid is a
     pure O-sequence.
 \end{corollary}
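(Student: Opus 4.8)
The plan is simply to put together the ingredients already in place above: the Proposition computing the $h$-vector of a paving matroid, Theorem~\ref{thm:main}, and the discussion of $f(r,d)$ and $\mathcal M_{r,d}$ preceding Lemma~\ref{lem:initial_values_f}. The first move is to reduce to the case where $M$ is loopless and coloopless. A loop of $M$ lies in no independent set, so deleting it changes neither $\Delta(M)$ nor its $h$-vector while leaving $M$ paving of the same rank; hence we may assume $M$ is loopless. A coloop $e$ lies in no circuit, so every circuit of $M$ survives in $M\setminus e$ with size at least $r(M)=r(M\setminus e)+1$, whence $M\setminus e$ is again paving, and by the discussion in Section~\ref{sec:prelim} the $h$-vector of $M$ is obtained from that of $M\setminus e$ by appending a zero. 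Since a multicomplex all of whose maximal monomials have degree $r$ witnesses both $(h_0,\dots,h_r)$ and $(h_0,\dots,h_r,0)$ as pure O-sequences, appending zeros is harmless, and it suffices to prove the claim for matroids in $\mathcal P_{r,n}$.

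So let $M\in\mathcal P_{r,n}$. If $r=0$ the $h$-vector is $(1)$ and there is nothing to prove, so assume $r\ge1$; being coloopless then forces $n\ge r+1$, so put $d=n-r\ge1$. By the Proposition the $h$-vector of $M$ is $(h_0,\dots,h_r)$ with $h_k=\binom{d+k-1}{k}$ for $0\le k\le r-1$ and $h_r=b(M)-\binom{n-1}{r-1}$. The upper bound $h_r\le\binom{d+r-1}{r}$ is immediate from $b(M)\le\binom nr$ together with Pascal's rule, while the lower bound $h_r\ge f(r,d)$ follows from $h_r\ge g(r,n)$ (by the definition of $g$) and $g(r,n)\ge f(r,n-r)$ (Theorem~\ref{thm:main}).

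It remains only to verify the implication that carries the content of the corollary: if $h_k=\binom{d+k-1}{k}$ for $k\le r-1$ and $f(r,d)\le h_r\le\binom{d+r-1}{r}$, then $(h_0,\dots,h_r)$ is a pure O-sequence. By the definition of $f(r,d)$ there is a pure multicomplex $\mathcal N$ in the indeterminates $z_1,\dots,z_d$, all of whose maximal monomials have degree $r$, with $\mathcal N\supseteq\mathcal M_{r-1,d}$ and exactly $f(r,d)$ monomials of degree $r$; since $\mathcal M_{r-1,d}$ already contains every monomial of degree at most $r-1$ and $\mathcal N$ has nothing of degree exceeding $r$, the degree-$k$ part of $\mathcal N$ for $k\le r-1$ is forced to have size $\binom{d+k-1}{k}$. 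Now adjoin to $\mathcal N$ a further $h_r-f(r,d)$ monomials of degree $r$ in $z_1,\dots,z_d$, which is possible because there are $\binom{d+r-1}{r}\ge h_r$ such monomials in total. Every proper divisor of a degree-$r$ monomial has degree at most $r-1$ and so already lies in $\mathcal M_{r-1,d}$, so the enlarged family is still a multicomplex; its maximal monomials still all have degree $r$, so it is still pure; and its $h$-vector is exactly $(h_0,\dots,h_r)$. This establishes the corollary. No serious obstacle remains once Theorem~\ref{thm:main} is in hand: the only points requiring care are the bookkeeping in the two reductions and this last, purely combinatorial, padding argument.
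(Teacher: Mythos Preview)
Your proof is correct and follows exactly the route the paper lays out: the reductions to the loopless and coloopless case are the ones in Section~\ref{sec:prelim}, and the remaining argument is precisely the implication stated just before Lemma~\ref{lem:initial_values_f} (``if $h_k=\binom{d+k-1}{k}$ for $0\le k\le r-1$ and $f(r,d)\le h_r\le\binom{d+r-1}{r}$, then $(h_0,\dots,h_r)$ is a pure $O$-sequence'') combined with Theorem~\ref{thm:main}. The paper leaves the corollary without proof because all of this has already been set up; you have simply written the details out, including the explicit padding construction for the multicomplex, which the paper asserts but does not spell out.
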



\section{Bounds on the number of bases of a sparse paving matroid}
One intriguing problem is to determine more about the functions $f$ and $g$ from the previous section. This appears to be a rather hard problem, in particular we have not been able to find tight bounds on the number of bases of a paving matroid in terms of its rank and number of elements. In this section we find a tight bound for the number of bases for a subclass of paving matroids, namely the sparse paving matroids. 

We will require the following result on paving matroids which is an Exercise in~\cite{oxley:book} (Page~132, Exercise~8).
 \begin{proposition}\label{paving_closed_minor}
   Paving matroids are closed under minors. Moreover a matroid $M$ is paving if and only if it does not contain the matroid
$U_{2,2}\oplus U_{0,1}$ as a minor.
 \end{proposition}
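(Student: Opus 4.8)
The starting point is the elementary reformulation that a matroid $N$ of rank $\rho$ is paving if and only if every subset of $E(N)$ of size at most $\rho-1$ is independent; this is immediate, since a dependent set of size at most $\rho-1$ contains a circuit of size at most $\rho-1$, and conversely a circuit of size at most $\rho-1$ is such a dependent set. Granting this, the first assertion reduces to checking that deleting or contracting a single element preserves the class, and then iterating. Let $M$ be paving of rank $r$ and $e\in E(M)$. For $M\setminus e$ we have $r(M\setminus e)\le r$, so any $S\subseteq E(M)\setminus e$ with $|S|\le r(M\setminus e)-1$ satisfies $|S|\le r-1$, hence is independent in $M$ and therefore in $M\setminus e$. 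For $M/e$ we may assume $e$ is not a loop (otherwise $M/e=M\setminus e$), so $r(M/e)=r-1$; if $S\subseteq E(M)\setminus e$ has $|S|\le r(M/e)-1=r-2$, then $S\cup\{e\}$ has size at most $r-1$ and so is independent in $M$, forcing $r_{M/e}(S)=r_M(S\cup\{e\})-r_M(\{e\})=|S|$, i.e.\ $S$ is independent in $M/e$. Thus paving is minor-closed.

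For the second assertion the forward implication is then immediate: $U_{2,2}\oplus U_{0,1}$ has rank $2$ but contains a loop, a circuit of size $1<2$, so it is not paving and hence cannot be a minor of a paving matroid. The converse is the step requiring the most care, and I would prove it by contraposition. Suppose $M$ is not paving, of rank $r$, and let $C$ be a circuit with $|C|\le r-1$; fix $f\in C$. Since $C\setminus f$ is independent and $C$ is a circuit, $f\in\overline{C\setminus f}$, so $f$ is a loop of $N:=M/(C\setminus f)$, while $r(N)=r-|C\setminus f|=r-|C|+1\ge 2$. Because $f$ is a loop, $r(N\setminus f)=r(N)\ge 2$, so $N$ contains an independent set $\{a,b\}$ disjoint from $\{f\}$. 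In the restriction $N|\{f,a,b\}$ the element $f$ is a loop and $\{a,b\}$ is independent of rank $2$, so $\{a,b\}$ is the unique basis and $a,b$ are coloops; hence $N|\{f,a,b\}\cong U_{2,2}\oplus U_{0,1}$, and since $N|\{f,a,b\}$ is a minor of $M$ we are done.

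The argument is essentially bookkeeping, so I anticipate no genuine obstacle; the one point to handle with some attention is the converse, in particular verifying that $f$ really becomes a loop after the contraction $M/(C\setminus f)$ and that the resulting rank is at least $2$, which is exactly what the hypothesis $|C|\le r-1$ buys. If a more explicit construction is preferred, one may instead extend $C\setminus f$ to a basis $B$, note that $M|(C\cup B)\cong U_{|C|-1,|C|}\oplus U_{r+1-|C|,\,r+1-|C|}$ with $r+1-|C|\ge 2$, and then contract $|C|-1$ elements of the first summand and delete all but two elements of the second to reach $U_{0,1}\oplus U_{2,2}$; the contraction argument above is a streamlined form of this.
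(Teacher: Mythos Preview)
Your proof is correct. The paper does not actually supply a proof of this proposition: it is stated without argument and attributed to an exercise in Oxley's book, so there is no ``paper's proof'' to compare against. What you have written is a clean, self-contained verification.

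A couple of minor remarks. In the deletion step you implicitly use that $r(M\setminus e)\in\{r-1,r\}$, but all you need (and all you use) is $r(M\setminus e)\le r$, so the argument is fine as written. In the contraposition for the converse, the key facts are exactly the two you flag: that $f$ becomes a loop in $M/(C\setminus f)$ because $f\in\overline{C\setminus f}$, and that $r(M/(C\setminus f))=r-|C|+1\ge 2$ because $C\setminus f$ is independent and $|C|\le r-1$. Both are justified correctly. Your alternative explicit construction via $M|(C\cup B)$ also works; the point that $M|(B\cup\{f\})$ has $C$ as its unique circuit (so that the elements of $B\setminus C$ are coloops) follows from the uniqueness of the fundamental circuit of $f$ with respect to $B$, and you might spell that out if you include this variant in a final write-up.
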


Sparse paving matroids were introduced by Jerrum in~\cite{jerrum:balanced,mayhew+royle:9}. 
A rank-$r$ matroid $M$ is {\em sparse paving}  if $M$ is paving and
  for every  pair of cycles $C_1$ and $C_2$ of size $r$ we have
  $|C_1\bigtriangleup C_2|>2$. For example, all uniform matroids are
  sparse paving matroids. The following lemma is straightforward.

 \begin{lemma}\label{sparse_rank_1}
  If $M$ is a sparse paving matroid with rank at most 1 and $n$
  elements, then $M$ is isomorphic to $U_{1,n}$, $U_{1,n-1}\oplus
  U_{0,1}$ or $U_{0,n}$.
 \end{lemma}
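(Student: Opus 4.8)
The plan is to split on the rank of $M$, which is either $0$ or $1$, and in each case use the elementary structure theory of low-rank matroids together with the defining symmetric-difference condition for sparse paving matroids.

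First I would dispose of the rank-$0$ case: if $r(M)=0$ then the empty set is the only basis, so every element of $E(M)$ is a loop, i.e.\ a circuit of size $1$, and hence $M\cong U_{0,n}$. Here one should simply remark that the sparse paving requirement on pairs of circuits of size $r=0$ is vacuous (there are none), so no further check is needed. For the rank-$1$ case, I would first establish the standard decomposition: if $e$ and $f$ are two non-loops of $M$, then $\{e,f\}$ has rank at most $r(M)=1$, so it is a circuit of size $2$, meaning $e$ and $f$ are parallel; therefore all non-loops lie in a single parallel class, and writing $k$ for its size we get $M\cong U_{1,k}\oplus U_{0,n-k}$ with $k\geq 1$ (as $r(M)=1$).

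Next I would bring in the sparse paving hypothesis. Since $r(M)=1$, the circuits of size $r$ are exactly the loops of $M$. If $M$ had two distinct loops $\ell_1\neq\ell_2$, then taking $C_1=\{\ell_1\}$ and $C_2=\{\ell_2\}$ gives $|C_1\bigtriangleup C_2|=2$, contradicting the condition $|C_1\bigtriangleup C_2|>2$. Hence $M$ has at most one loop, i.e.\ $n-k\leq 1$. This leaves the two possibilities $k=n$, giving $M\cong U_{1,n}$, and $k=n-1$, giving $M\cong U_{1,n-1}\oplus U_{0,1}$, which together with the rank-$0$ conclusion yields exactly the three stated matroids.

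I do not expect any genuine obstacle here; the statement is indeed straightforward. The only points requiring a moment's care are (i) noting that every rank-$\leq 1$ matroid is automatically paving, so the content of the hypothesis is entirely in the sparse condition, and (ii) handling the rank-$0$ case correctly, where loops are size-$1$ circuits but the sparse condition concerns size-$0$ circuits and is therefore vacuously satisfied.
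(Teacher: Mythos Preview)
Your argument is correct. The paper does not actually give a proof of this lemma; it simply declares it ``straightforward'' and moves on. Your write-up supplies precisely the missing details, and each step checks out: the rank-$0$ case is immediate, the decomposition of a rank-$1$ matroid into a single parallel class plus loops is standard, and your use of the symmetric-difference condition to bound the number of loops by one is exactly what the sparse paving hypothesis contributes. The two care points you flag---that every rank-$\le 1$ matroid is automatically paving, and that the sparse condition is vacuous when $r=0$---are the only subtleties, and you handle both correctly.
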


 \begin{theorem}\label{sparse_circuit_hyperplane}
   Let $M$ be a paving matroid of rank $r\geq 2$. Then $M$ is
   sparse paving if and only if all the hyperplanes of $M$ have size $r$
   or $r-1$ and the hyperplanes of size
   $r$ are  precisely its circuits of size $r$.
 \end{theorem}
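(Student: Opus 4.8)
The plan is to translate the sparse paving condition into a statement about the pairwise intersections of the circuits of size $r$, and then to move back and forth through the closure operator, exploiting the fact that in a rank-$r$ paving matroid with $r \geq 2$ the only dependent set of size at most $r$ is a circuit of size exactly $r$. So first I would record the elementary facts I need. In such an $M$: every subset of size at most $r-1$ is independent, hence has rank equal to its cardinality; a subset $C$ with $|C|=r$ is dependent if and only if it is a circuit if and only if $r(C)=r-1$; every hyperplane is a flat of rank $r-1$, hence has size at least $r-1$; and a set $C$ with $|C|=r$ is a circuit if and only if $C$ is contained in some hyperplane --- indeed if $C\subseteq H$ with $H$ a hyperplane then $r(C)\leq r(H)=r-1$ so $C$ is dependent, while conversely a circuit $C$ of size $r$ has $r(C)=r-1$ (delete an element to get an independent $(r-1)$-set), so $\overline{C}$ is a hyperplane containing $C$. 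Finally, for distinct sets $C_1,C_2$ of size $r$ one has $|C_1\bigtriangleup C_2|>2$ if and only if $|C_1\cap C_2|\leq r-2$, so ``sparse paving'' says precisely that distinct circuits of size $r$ meet in at most $r-2$ elements.

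For the forward direction, assume $M$ is sparse paving. The key point is that no hyperplane can have size at least $r+1$: if $H$ were one, pick an $(r-1)$-subset $A\subseteq H$ together with two distinct elements $x,y\in H\setminus A$; then $A\cup\{x\}$ and $A\cup\{y\}$ are two distinct $r$-subsets of $H$, hence (by the fact above) circuits of size $r$, and their intersection is $A$, of size $r-1>r-2$, contradicting sparse paving. Hence every hyperplane has size $r-1$ or $r$. Now a hyperplane $H$ of size $r$ is dependent of size $r$, hence a circuit; conversely a circuit $C$ of size $r$ has $\overline{C}$ a hyperplane containing it, and since every hyperplane has size at most $r$ we get $\overline{C}=C$, so $C$ is a hyperplane of size $r$. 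This gives the claimed description of the size-$r$ hyperplanes.

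For the converse, assume every hyperplane has size $r-1$ or $r$ and the hyperplanes of size $r$ are exactly the circuits of size $r$. Let $C_1\neq C_2$ be circuits of size $r$; by hypothesis each is a hyperplane. Since they are distinct flats of the same rank $r-1$, neither contains the other, so $C_1\cap C_2$ is a proper subset of the circuit $C_1$ and is therefore independent; moreover its rank cannot be $r-1$, since otherwise $\overline{C_1\cap C_2}$ would be a hyperplane contained in both $C_1$ and $C_2$, forcing $C_1=C_2$. Hence $|C_1\cap C_2|=r(C_1\cap C_2)\leq r-2$, so $|C_1\bigtriangleup C_2|=2r-2|C_1\cap C_2|\geq 4>2$, and $M$ is sparse paving.

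The argument is entirely at the level of basic matroid structure theory, so I do not expect any serious obstacle; the main things to get right are the two-sided equivalence ``circuit of size $r$ if and only if contained in a hyperplane'', the handling of the ``distinct'' hypothesis on $C_1$ and $C_2$ (without which the symmetric-difference inequality fails), and consistency with the small cases --- for instance $r=2$, where the hyperplanes are the parallel classes and the statement reduces to ``all parallel classes have size at most $2$''. Once the preliminary facts in the first paragraph are in place, the rest is just bookkeeping with cardinalities and ranks.
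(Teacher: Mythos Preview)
Your proof is correct and follows essentially the same approach as the paper's: in the forward direction both arguments pick an $(r-1)$-element independent subset of a putative large hyperplane and extend it in two ways to produce circuits with symmetric difference $2$, and in the converse both use that distinct size-$r$ circuits are distinct hyperplanes whose intersection therefore has rank at most $r-2$. Your write-up is somewhat more explicit (you spell out the equivalence ``$r$-set is a circuit iff contained in a hyperplane'' and justify the rank bound on the intersection of two hyperplanes rather than citing it), but the underlying argument is the same.
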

 \begin{proof}
    For the forward implication let $H$ be a hyperplane of $M$ and $I$
    be a maximal independent set
    contained in $H$. If there are two elements $e\neq f$ in
    $H\setminus I$, then
    $C_1=I\cup\{e\}$ and $C_2=I\cup\{f\}$ are circuits of size $r$
    but $|C_1\bigtriangleup C_2|= 2$, contrary to the assumption that $M$
    is sparse paving. Thus, any hyperplane has size either $r-1$ or
    $r$ and in the case that $H$ has size $r$ it will also
    be a circuit. Let $C$ be a circuit of $M$ of size $r$. Then
    since $C$ has rank $r-1$, $\overline{C}$, the closure of $C$, is
    a hyperplane. By the previous argument this hyperplane has size
    $r$ and as $C\subseteq
    \overline{C}$, we conclude that $C$ is a hyperplane.

   To prove the converse, we take two circuits $C_1$ and $C_2$ of size
   $r$ in $M$. Then $I=C_1\cap C_2$ is an independent set and because
   $I$ is the intersection of two hyperplanes, its rank is at most
   $r-2$. So, $|C_1\cap C_2| \leq r-2$ and  $|C_1\bigtriangleup C_2|>
   2$.
 \end{proof}
The following result appears to be (recent) folklore but we are unable to find a reference.
 \begin{theorem}\label{sparse_closed_duality}
   If $M$ is an $n$-element sparse paving matroid, then $M^{*}$ is
   also sparse paving.
 \end{theorem}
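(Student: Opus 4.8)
The plan is to recast the sparse paving property in a form that is manifestly self-dual and then conclude by taking complements. Call a subset of $E$ a \emph{circuit-hyperplane} of a matroid $M$ if it is simultaneously a circuit and a hyperplane of $M$. The crucial step is the following reformulation: \emph{a rank-$r$ matroid $M$ on an $n$-element ground set $E$ is sparse paving if and only if every $r$-element subset of $E$ is either a basis or a circuit-hyperplane of $M$.}

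For $r\geq 2$ this is essentially a repackaging of Theorem~\ref{sparse_circuit_hyperplane}. If $M$ is sparse paving then it is paving, so an $r$-subset $X$ is either independent --- hence a basis, as $|X|=r$ --- or dependent, in which case it is a circuit (its proper subsets are too small to contain a circuit) and therefore, by Theorem~\ref{sparse_circuit_hyperplane}, also a hyperplane. Conversely, suppose every $r$-subset is a basis or a circuit-hyperplane. Then $M$ is paving, since a circuit of size less than $r$ could be padded to an $r$-subset which would be dependent but not a circuit, contradicting the hypothesis. The size-$r$ circuits are hyperplanes by hypothesis, while a size-$r$ hyperplane has rank $r-1$, so it is a dependent $r$-set and hence, by hypothesis, a circuit-hyperplane; thus the size-$r$ hyperplanes are exactly the size-$r$ circuits. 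Finally, no hyperplane has size greater than $r$: such a hyperplane $H$ has rank $r-1$ and so properly contains a circuit $C$, and since $M$ is paving $|C|$ equals $r$ or $r+1$; but $|C|=r+1$ would make $C$ spanning, contradicting $C\subseteq H$, while $|C|=r$ would make $C$ a hyperplane properly contained in the hyperplane $H$ of equal rank, which is impossible. Hence every hyperplane has size $r-1$ or $r$, and Theorem~\ref{sparse_circuit_hyperplane} gives that $M$ is sparse paving. The cases $r\leq 1$ follow directly from Lemma~\ref{sparse_rank_1}.

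Granting the reformulation, the theorem drops out of matroid duality. For $X\subseteq E$ we have $|X|=r(M)$ if and only if $|E\setminus X|=n-r(M)=r(M^{*})$; also $X$ is a basis of $M$ if and only if $E\setminus X$ is a basis of $M^{*}$; and $X$ is a circuit-hyperplane of $M$ if and only if $E\setminus X$ is a circuit-hyperplane of $M^{*}$, since a set is a hyperplane of a matroid precisely when its complement is a cocircuit, that is, a circuit of the dual, and applying this to $M^{*}$ also shows a set is a circuit of $M$ precisely when its complement is a hyperplane of $M^{*}$. Thus $X\mapsto E\setminus X$ is a bijection from the $r(M)$-subsets of $E$ onto the $r(M^{*})$-subsets of $E$ carrying the sets that are ``a basis or a circuit-hyperplane of $M$'' onto those that are ``a basis or a circuit-hyperplane of $M^{*}$''. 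Applying the reformulation to both $M$ and $M^{*}$, we conclude that $M$ is sparse paving if and only if $M^{*}$ is; in particular $M^{*}$ is sparse paving whenever $M$ is.

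There is no genuinely hard step. The only places needing care are the reverse direction of the reformulation --- especially the bound $|H|\leq r$ on the size of a hyperplane, which is where the paving hypothesis really bites --- and the small-rank cases, both routine. One should also double-check the hyperplane-versus-cocircuit correspondence that makes the reformulated condition self-dual. (An equivalent and even shorter packaging uses the self-dual fact that a matroid is sparse paving if and only if both it and its dual are paving.)
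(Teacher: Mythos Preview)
Your proof is correct. Both your argument and the paper's hinge on the same two ingredients: Theorem~\ref{sparse_circuit_hyperplane} and the standard duality between circuits and hyperplanes. The difference is one of packaging. The paper proceeds by cases on the rank (small rank via Lemma~\ref{sparse_rank_1}, large rank via uniformity, and the generic range by directly checking the hypotheses of Theorem~\ref{sparse_circuit_hyperplane} for $M^{*}$), whereas you first extract the intermediate characterization ``every $r$-subset is a basis or a circuit-hyperplane'' and then observe that this condition is manifestly self-dual under complementation. Your route is a bit cleaner: once the reformulation is in hand, the duality step is a single sentence and the separate treatment of ranks $n-1$ and $n$ becomes unnecessary. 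The price is the extra work in the reverse direction of the reformulation (the bound $|H|\le r$), which the paper avoids by working with Theorem~\ref{sparse_circuit_hyperplane} directly on both sides. Either way, the substance is the same.
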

 \begin{proof}
   If $M$ has rank at most 1, it follows by Lemma~\ref{sparse_rank_1}
   that $M^{*}$ is isomorphic to $U_{n-1,n}$, or $U_{n-2,n-1}\oplus
   U_{1,1}$ or $U_{n,n}$. In each case $M^{*}$ is sparse paving.

   Let us suppose that $M$ has rank $2\leq r \leq n-2$. By duality,
   $C$ is a circuit of a matroid $N$ over $E$ if and only if
   $E\setminus C$ is a hyperplane of $N^{*}$. 
   From
   Theorem~\ref{sparse_circuit_hyperplane} it follows that all the
   hyperplanes of $M$ have size $r$ or $r+1$. Consequently all the circuits of $M^{*}$ have size $n-r$ or $n-r-1$ and so $M^{*}$ is paving. 
   
   Furthermore all the hyperplanes of $M^{*}$ have size $n-r$ or $n-r-1$ and since hyperplanes and circuits of $M$ of size $r$ coincide, hyperplanes and circuits of $M^{*}$ also coincide and so by Theorem~\ref{sparse_circuit_hyperplane}, $M^{*}$ is sparse paving.
   
   Finally, if the rank of $M$ is $n-1$ or $n$, then $M$ and $M^{*}$ are
   uniform matroids and the result follows.
 \end{proof}

The next result was first proved by Jerrum~\cite{jerrum:balanced}. It follows immediately from Theorem~\ref{sparse_closed_duality} and the fact that the collection of circuits of
   $M\setminus e$ is the collection of circuits of $M$ that do not contain $e$.
 \begin{theorem}
   Sparse paving matroids are closed under minors
 \end{theorem}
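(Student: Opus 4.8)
The plan is to establish closure under the two elementary minor operations separately and then combine them. Since paving matroids are already known to be minor-closed (Proposition~\ref{paving_closed_minor}), the only extra thing to check at each step is the sparseness condition: that any two circuits attaining the minimum possible circuit size have symmetric difference larger than $2$. The two tools I would use are the description of the circuits of $M\setminus e$ as exactly those circuits of $M$ that avoid $e$, together with Theorem~\ref{sparse_closed_duality}, which allows any statement about deletion to be converted into one about contraction.

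First I would handle deletion: if $M$ is sparse paving of rank $r$ and $e\in E(M)$, then $M\setminus e$ is sparse paving. It is paving by Proposition~\ref{paving_closed_minor}, so suppose $C_1$ and $C_2$ are circuits of $M\setminus e$ of size $r(M\setminus e)$. Each $C_i$ is a circuit of $M$ not containing $e$, hence $|C_i|\geq r$ because $M$ is paving. If $e$ is not a coloop of $M$ then $r(M\setminus e)=r$, so $|C_1|=|C_2|=r$ and the sparseness of $M$ gives $|C_1\bigtriangleup C_2|>2$. If $e$ is a coloop then $r(M\setminus e)=r-1$, but every circuit of $M\setminus e$ has size at least $r>r-1$, so there are no circuits of size $r(M\setminus e)$ and the condition holds vacuously. (When $r(M)\leq 1$ the whole statement is immediate from Lemma~\ref{sparse_rank_1}, since then every single-element deletion or contraction again has one of the three listed forms.)

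Next, contraction follows by duality via the identity $M/e=(M^{*}\setminus e)^{*}$. By Theorem~\ref{sparse_closed_duality} the dual $M^{*}$ is sparse paving; by the deletion step $M^{*}\setminus e$ is sparse paving; and applying Theorem~\ref{sparse_closed_duality} once more shows $(M^{*}\setminus e)^{*}=M/e$ is sparse paving. Finally, an arbitrary minor of $M$ has the form $M\setminus X/Y$ for disjoint $X,Y\subseteq E(M)$ and is obtained from $M$ by deleting and contracting single elements one at a time, so applying the two steps finitely often shows that every minor of a sparse paving matroid is sparse paving.

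I do not expect a serious obstacle here; the argument is essentially bookkeeping once Theorem~\ref{sparse_closed_duality} is available. The one point that requires a little care is the possible drop in rank when deleting a coloop: one must remember that the sparseness condition refers to circuits of size $r(M\setminus e)$, not $r(M)$, and observe that in that case no circuit attains the minimum size, so the condition is vacuously satisfied.
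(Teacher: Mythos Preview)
Your proposal is correct and follows essentially the same approach as the paper: the paper's proof is a one-line remark that the result ``follows immediately from Theorem~\ref{sparse_closed_duality} and the fact that the collection of circuits of $M\setminus e$ is the collection of circuits of $M$ that do not contain $e$,'' and you have simply written out these details carefully, including the coloop edge case.
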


  \begin{theorem}
      A matroid $M$ is sparse paving if and only if it does not have
   $U_{2,2}\oplus U_{0,1}$ nor $U_{0,2}\oplus U_{1,1}$ as  minors.
  \end{theorem}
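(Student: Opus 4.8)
The plan is to bootstrap from the two facts already at our disposal: Proposition~\ref{paving_closed_minor}, which identifies $U_{2,2}\oplus U_{0,1}$ as the unique excluded minor for paving matroids, and Theorem~\ref{sparse_closed_duality}, which says that the class of sparse paving matroids is closed under duality. The observation that links the two forbidden minors is the computation
\[
   (U_{2,2}\oplus U_{0,1})^{*}\cong U_{0,2}\oplus U_{1,1},
\]
together with the standard fact that $N$ is a minor of $M$ if and only if $N^{*}$ is a minor of $M^{*}$, a consequence of $(M\setminus e)^{*}=M^{*}/e$ and $(M/e)^{*}=M^{*}\setminus e$.

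For the forward implication I would argue: if $M$ is sparse paving then $M$ is paving, so Proposition~\ref{paving_closed_minor} forbids a $U_{2,2}\oplus U_{0,1}$ minor; moreover $M^{*}$ is sparse paving by Theorem~\ref{sparse_closed_duality}, hence paving, hence has no $U_{2,2}\oplus U_{0,1}$ minor, and dualising together with the displayed identity shows $M$ has no $U_{0,2}\oplus U_{1,1}$ minor.

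For the converse I would prove the contrapositive: if $M$ is not sparse paving, then it has one of the two forbidden minors. If $M$ is not even paving, Proposition~\ref{paving_closed_minor} supplies a $U_{2,2}\oplus U_{0,1}$ minor and we are done, so assume $M$ is paving but not sparse paving. Then there exist circuits $C_{1},C_{2}$ of size $r=r(M)$ with $|C_{1}\bigtriangleup C_{2}|=2$, and necessarily $r\geq 1$ since the empty set is independent. If $r=1$, then $C_{1}$ and $C_{2}$ are two distinct loops; since $r(M)=1$ there is also a non-loop $g$, and then the restriction of $M$ to $C_{1}\cup C_{2}\cup\{g\}$ is isomorphic to $U_{0,2}\oplus U_{1,1}$, a minor of $M$. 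If $r\geq 2$, set $I=C_{1}\cap C_{2}$, so that $|I|=r-1$; because $M$ is paving, $I$ is independent, so its closure $H=\overline{I}$ is a hyperplane, and since $C_{1},C_{2}\subseteq H$ we get $|H|\geq r+1$. Consequently $E\setminus H$ is a cocircuit of $M$, i.e.\ a circuit of $M^{*}$, of size $|E|-|H|\leq |E|-r-1=r(M^{*})-1<r(M^{*})$, so $M^{*}$ is not paving; Proposition~\ref{paving_closed_minor} then gives $M^{*}$ a $U_{2,2}\oplus U_{0,1}$ minor, and dualising gives $M$ a $U_{0,2}\oplus U_{1,1}$ minor, as required.

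The only substantive point is the $r\geq 2$ case of the converse, where one must recognise a ``too large'' hyperplane of $M$ as a ``too small'' cocircuit, hence a small circuit of $M^{*}$; this is precisely the way in which $U_{0,2}\oplus U_{1,1}$ certifies non-sparseness, paralleling the way $U_{2,2}\oplus U_{0,1}$ certifies non-pavingness. One could instead cite Theorem~\ref{sparse_circuit_hyperplane} in place of the short closure computation, or route the whole converse through the equivalence ``$M$ is sparse paving if and only if both $M$ and $M^{*}$ are paving,'' using Lemma~\ref{sparse_rank_1} for the low-rank bookkeeping; the content is the same in each case, and no step presents a serious obstacle beyond keeping the dualities and the small-rank cases straight.
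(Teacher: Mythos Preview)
Your proof is correct and follows essentially the same strategy as the paper: both directions hinge on duality together with Proposition~\ref{paving_closed_minor}, and in the converse both arguments locate a hyperplane of size at least $r+1$. The only cosmetic difference is in how the $U_{0,2}\oplus U_{1,1}$ minor is then extracted: the paper builds it explicitly by deleting $E\setminus(H\cup\{g\})$ and contracting $H\setminus\{e,f\}$, whereas you pass to the complementary cocircuit, conclude $M^{*}$ is not paving, and invoke Proposition~\ref{paving_closed_minor} on $M^{*}$ before dualising back---which is just the same construction viewed from the dual side.
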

  \begin{proof}
   By Theorem~\ref{paving_closed_minor}, it is enough to prove that a
   paving matroid $M$ is sparse if and only if $M$ does not contain
   $U_{0,2}\oplus U_{1,1}$ as a minor.

   If a rank-$r$ paving matroid $M$ contains $U_{0,2}\oplus U_{1,1}$ as a minor,
   then $M^{*}$ contains $U_{2,2}\oplus U_{0,1}$ as a minor and by Proposition~\ref{paving_closed_minor} it is not
   paving. Thus $M$ cannot be sparse paving by
   Theorem~\ref{sparse_closed_duality}.

   Suppose $M$ is a rank-$r$ paving matroid with $n$ elements that is not
   sparse. All hyperplanes of $M$ of size $r$ must be circuits because every set of size $r-1$ is independent. 
   Consequently there must exist
   a hyperplane $H$ of size at least $r+1$. 
   Let $I$ be a maximal indpendent set in $H$ and let $\{e,f\}\in H\setminus I$.
   Now let $g\not\in
   H$. If we delete the elements in $E\setminus (H\cup\{g\})$ and
   contract the elements in $H\setminus \{e,f\}$ we get a
   $U_{0,2}\oplus U_{1,1}$ minor.
  \end{proof}

 In order to get more properties of sparse paving matroids, we need
 the following definition from~\cite{oxley:book}. Given integer $k>1$ and  $m>0$, a collection  $\mathcal{T}=\{T_1,\ldots,T_k\}$ of subsets
 of a set $E$, such that each member of $\mathcal{T}$ has at least $m$
 elements and each $m$-element subset of $E$ is contained in a unique
 member of  $\mathcal{T}$, is called an $m$-partition of $E$. The
 following proposition is also from~\cite{oxley:book}.

 \begin{proposition}
   If $\mathcal{T}$ is an $m$-partition of $E$, then $\mathcal{T}$ is
   the set of hyperplanes of a paving matroid of rank $m+1$ on
   $E$. Moreover, for $r\geq 2$, the set of hyperplanes of every
   rank-$r$ paving matroid on $E$ is an $(r-1)$-partition of $E$.
 \end{proposition}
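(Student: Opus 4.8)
The plan is to prove the two assertions separately, in each case working through the closure operator of the relevant matroid.

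For the first assertion, given an $m$-partition $\mathcal{T}=\{T_1,\dots,T_k\}$ of $E$ I would define
\[
  \mathcal{I}=\bigl\{\,X\subseteq E:|X|\le m\,\bigr\}\cup\bigl\{\,X\subseteq E:|X|=m+1\text{ and }X\not\subseteq T_i\text{ for every }i\,\bigr\},
\]
and show that $M=(E,\mathcal{I})$ is a matroid. Axioms (I1) and (I2) are immediate. For the augmentation axiom (I3), take $X,Y\in\mathcal{I}$ with $|X|<|Y|$; the only case that is not routine is $|X|=m$, so that $|Y|=m+1$ and $Y\not\subseteq T_i$ for every $i$. If no $e\in Y\setminus X$ gave $X\cup e\in\mathcal{I}$, then for each such $e$ we would have $X\cup e\subseteq T_{i(e)}$ for some block $T_{i(e)}$; since the $m$-set $X$ lies in a \emph{unique} member of $\mathcal{T}$, all the $T_{i(e)}$ coincide with one block $T_j\supseteq X$, whence $Y\subseteq X\cup(Y\setminus X)\subseteq T_j$, a contradiction. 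I expect this to be the main point, as it is precisely where the uniqueness clause in the definition of an $m$-partition does the work.

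Next I would note that every subset of $E$ of size at most $m$ lies in $\mathcal{I}$, so all circuits of $M$ have size at least $m+1$, and that $M$ has rank exactly $m+1$: the rank is clearly at most $m+1$, and $k>1$ forces both $|E|\ge m+1$ and the existence of an $(m+1)$-subset of $E$ lying in no block (otherwise, fixing an $m$-subset $A\subseteq T_j$ and adjoining the elements of $E\setminus A$ one at a time would force $T_j=E$), which gives an independent set of size $m+1$. Hence $M$ is a paving matroid of rank $m+1$. To identify its hyperplanes, the crux is the identity $\overline{A}=T_i$ whenever $A$ is an $m$-subset contained in $T_i$: the inclusion $T_i\subseteq\overline{A}$ holds because $A\cup e$ is dependent for every $e\in T_i\setminus A$, and conversely any $e\in\overline{A}\setminus A$ makes $A\cup e$ a dependent $(m+1)$-set, hence contained in a block, which by uniqueness must be $T_i$, so $e\in T_i$. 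Thus each $T_i$, being the closure of an $m$-set, is a rank-$m$ flat, i.e.\ a hyperplane; and any hyperplane $H$ is the closure of a basis of $H$ (an $m$-set) and so equals some $T_i$. Therefore the set of hyperplanes of $M$ is exactly $\mathcal{T}$.

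For the ``moreover'' assertion, let $M$ be a paving matroid of rank $r\ge2$ and set $m=r-1\ge1$. Each hyperplane of $M$ has rank $r-1=m$, so it contains a basis of size $m$ and hence has at least $m$ elements. If $A$ is an $m$-subset of $E$, then $A$ is independent (as $M$ is paving), so $r(A)=m$ and $\overline{A}$ is a hyperplane containing $A$; and if $H$ is any hyperplane with $A\subseteq H$ then $\overline{A}\subseteq H$ with $r(\overline{A})=m=r(H)$, forcing $H=\overline{A}$, so the hyperplane containing $A$ is unique. Finally there are at least two hyperplanes: for a basis $B=\{b_1,\dots,b_r\}$ the flats $\overline{B\setminus b_1}$ and $\overline{B\setminus b_2}$ are hyperplanes and are distinct because $b_1\in\overline{B\setminus b_2}\setminus\overline{B\setminus b_1}$. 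Hence the set of hyperplanes of $M$ is an $(r-1)$-partition of $E$, which completes the proof.
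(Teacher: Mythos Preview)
Your proof is correct. Note, however, that the paper does not actually supply a proof of this proposition: it is quoted as a known result from Oxley's book, so there is no ``paper's own proof'' to compare against. What you have written is essentially the standard argument one finds for this characterisation of paving matroids via $m$-partitions: define the independent sets directly, verify (I3) using the uniqueness clause in the definition of an $m$-partition, and then identify hyperplanes with blocks via the closure operator. The converse direction, showing that the hyperplanes of a rank-$r$ paving matroid form an $(r-1)$-partition, is likewise the expected argument through closures of $(r-1)$-sets.

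One very minor expository point: in your parenthetical ``(otherwise, fixing an $m$-subset $A\subseteq T_j$ and adjoining the elements of $E\setminus A$ one at a time would force $T_j=E$)'', you stop at $T_j=E$ without stating the contradiction explicitly. It is of course immediate---a second block $T_{j'}$ (which exists since $k>1$) would then share an $m$-subset with $T_j$, violating uniqueness---but in a written proof it is worth spelling this out.
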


 Thus the collection of hyperplanes of a sparse paving matroid $M$ of rank $r\geq 2$ are the circuits of size $r$ together with
 the independent sets of size $r-1$ not contained in any circuit of
 size $r$. Also, because the hyperplanes of $M$ form an
 $(r-1)$-partition, any  subset $A$ of size $r-1$, that is not a
 hyperplane, (so $A$ is an independent set contained in some circuit of size $r$) is  contained in a  unique circuit of size $r$.

Any Steiner system $S(r-1,r,n)$ corresponds to a sparse paving matroid 
by taking the bases to be all sets of size $r$ not appearing as blocks of the Steiner system.
As the number of blocks in
 a $S(r-1,r,n)$ is $\frac{1}{r}\binom n {r-1}$ we see that the number 
of bases of the corresponding sparse paving matroid is  
$\binom n r-\frac{1}{r}\binom n
{r-1}$ = $\frac{n-r}{r}\binom n{r-1}$. The next result shows that this is a lower bound for the number of bases of a sparse paving matroid.  Because the Steiner systems $S(2,3,6p+1)$,  $S(2,3,6p+3)$
  (see~\cite{kirkman}) and  $S(3,4,6p+2)$,  $S(3,4,6p+4)$
  (see~\cite{hanani}) exist
  for all $p$,
  there is an infinite number of matroids that achieve our bound.

  \begin{theorem}
  Let $M$ be a rank-$r$ matroid with $n$ elements and $r\geq 1$.
  If $M$ is a sparse paving matroid then it has at least
  $\frac{n-r}{r}\binom n{r-1}$ bases. 
  \end{theorem}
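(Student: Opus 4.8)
The plan is to reduce the statement to an upper bound on the number of circuit-hyperplanes of $M$, and then to establish that bound by a double count over the $(r-1)$-element subsets of the ground set.

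First I would handle the degenerate case $r=1$ by hand: by Lemma~\ref{sparse_rank_1} a rank-$1$ sparse paving matroid on $n$ elements is $U_{1,n}$ or $U_{1,n-1}\oplus U_{0,1}$, with $n$ and $n-1$ bases respectively, and both are at least $\frac{n-1}{1}\binom{n}{0}=n-1$. (The case $n=r$ is also immediate, the bound being $0$.) So from now on assume $r\ge 2$. Recall that in any rank-$r$ paving matroid every set of size at most $r-1$ is independent, while a dependent $r$-set must coincide with a circuit of size $r$; hence the bases of $M$ are precisely the $r$-subsets of $E$ that are not circuits. Moreover, by Theorem~\ref{sparse_circuit_hyperplane} the circuits of size $r$ in a sparse paving matroid are exactly its hyperplanes of size $r$. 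Writing $\lambda$ for the number of these circuit-hyperplanes, we therefore obtain $b(M)=\binom{n}{r}-\lambda$. Since $\binom{n}{r}-\frac1r\binom{n}{r-1}=\frac{n-r}{r}\binom{n}{r-1}$, the theorem is equivalent to the inequality $\lambda\le\frac1r\binom{n}{r-1}$.

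To prove this I would count the pairs $(H,A)$ in which $H$ is a circuit-hyperplane and $A\subseteq H$ has size $r-1$. A circuit-hyperplane has exactly $r$ elements, so it contains exactly $\binom{r}{r-1}=r$ such subsets $A$; hence there are exactly $r\lambda$ pairs. On the other hand, since for $r\ge 2$ the hyperplanes of a rank-$r$ paving matroid form an $(r-1)$-partition of $E$, every $(r-1)$-subset $A$ of $E$ lies in a unique hyperplane, and hence (circuit-hyperplanes being among the hyperplanes) in at most one circuit-hyperplane; so each of the $\binom{n}{r-1}$ subsets $A$ is used in at most one pair. Comparing the two counts gives $r\lambda\le\binom{n}{r-1}$, that is $\lambda\le\frac1r\binom{n}{r-1}$, and therefore $b(M)=\binom{n}{r}-\lambda\ge\frac{n-r}{r}\binom{n}{r-1}$, as required.

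The argument is short and I do not expect a genuine obstacle; the only point that needs care is the bookkeeping. One must make sure that the three structural facts used — the identification of bases with non-circuit $r$-sets, the coincidence of size-$r$ circuits with size-$r$ hyperplanes (Theorem~\ref{sparse_circuit_hyperplane}), and the $(r-1)$-partition property of the hyperplanes — are invoked only in the regime $r\ge 2$ where they have been established, with the low-rank case and the trivial case $n=r$ treated separately as indicated.
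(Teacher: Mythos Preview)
Your proof is correct and is essentially the same double-counting argument as the paper's: both use that every $(r-1)$-subset lies in at most one size-$r$ circuit (via the $(r-1)$-partition property) and compare against the $r$ subsets of size $r-1$ inside each $r$-set. The only cosmetic difference is that the paper counts incidences between bases and $(r-1)$-sets to get $r\,b(M)\ge (n-r)\binom{n}{r-1}$ directly, whereas you count incidences between circuit-hyperplanes and $(r-1)$-sets to bound $\lambda$ and then subtract from $\binom{n}{r}$.
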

   \begin{proof}
    If $r=1$, $M$ is isomorphic to either $U_{1,n}$, $U_{1,n-1}\oplus
  U_{0,1}$ by Lemma~\ref{sparse_rank_1}. Both of these matroids have at least $n-1$ bases.

  Let us suppose that $r\geq 2$. Because $M$ is paving, every subset
  of size $r-1$ is independent and, because it is sparse, the remarks preceding the theorem imply that any
  set of size $r-1$ is in at most one circuit of size $r$. Now, form
  the bipartite graph of bases and
  independent sets of size $r-1$. That is, the vertices are the
  independent sets of sizes $r$ or $r-1$ and there is an edge $(B,I)$
  if and only if the base $B$ contains the independent set  $I$. The
  degree of any independent set $I$ of size $r-1$ is at least $n-r$.
  So the number of edges in the bipartite graph is at least
  $(n-r) \binom n{r-1}$. As the degree of any basis $B$ in
  this graph
  is $r$, the result follows.
   \end{proof}

 Many invariants that are usually difficult to compute for a general
 matroid are easy for sparse paving matroids. For example, observe
 that if $M$ is sparse paving, all subsets of size $k<r$ are independent,
 and all subsets of size $k>r$ are spanning. On the other hand the
 subsets of size $r$ are either bases or circuit--hyperplanes. Thus,
 the Tutte polynomial of a rank-$r$ sparse matroid $M$ with $n$
 elements and $\lambda$ hyperplanes is
 \[
   T_{M} (x,y)= \sum_{i=0}^{r-1}\binom ni(x-1)^{r-i}+
                           \binom nr + \lambda(xy-x-y)+
                           \sum_{i=r+1}^{n}\binom ni(y-1)^{i-r}.
  \]

\section{Bounds for number of bases of paving matroids and sizes of multicomplexes}
\label{sec:bounds}

 In the previous section we gave a tight lower bound for the
 number of bases of a sparse paving matroid. Such a lower bound is a
 more difficult to obtain in the case of paving matroids. In
 order to find a bound for the number of bases we investigate further
  the functions $f(p,d)$ and $g(p,n)$ defined in Section~\ref{sec:Stanleyconj}.

 \subsection{The function f(r,d)}

 We define two families of graphs. First, we define the graph
 $G_{r,d}$ to have one vertex corresponding to each monomial of degree $r$ over $d$ variables and have an edge
 $\{m,m'\}$ if and only if there exist distinct variables $x$ and $y$
 such that $m'=\frac{m}{x}y$. The second family is similar. We
 define $TG_{r,d}$ to have the same vertex set as $G_{r,d}$ and have an edge
 $\{m,m'\}$ if and only if there exist different variables $x$ and $y$
 such that $m'=\frac{m}{y}x$ or there exists a variable $y$ such that
 $m'=\frac{m}{y}$.

 Clearly, $G_{r,i}$ is an induced subgraph of
 $TG_{r,d}$ for all $0\leq i\leq d$. 
 Recall that a set $U$ of vertices dominates a set $U'$ of vertices in a graph 
 if every vertex in $U'\setminus U$ is adjacent to a vertex in $U$.
 The problem of finding $f(r,d)$
 can be translated to the problem of finding the vertex subset of
 $G_{r,d}$ of minimum size that dominates the
 vertex set of
 $G_{r-1,d}$. 
 
 For this purpose we define the {\it standard colouring} $C_{d}$ of
 $G_{r,d}$.  Let us suppose the $d$  variables are $\{x_0,\ldots,
 x_{d-1}\}$. To each variable $x_i$ we associate the colour
 $\varrho_d(X_i)=i \bmod d$ and then  we extend this colouring linearly to all
 monomials, that is, for
 a monomial $m=x_{0}^{t_{0}}\cdots x_{d-1}^{t_{d-1}}$, the value of
 $\varrho_d(m)$ is $0\,t_{0}+\ldots+(d-1)t_{d-1} \bmod d$.

 \begin{lemma}
     The standard colouring $\varrho_d$ is a proper colouring and
     $\chi(G_{r,d})\leq d$.
 \end{lemma}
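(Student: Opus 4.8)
The plan is to show two things: first that $\varrho_d$ assigns different colours to the endpoints of every edge of $G_{r,d}$, and second that the colouring uses at most $d$ colours, which gives the chromatic bound immediately. The second point is essentially by construction, since $\varrho_d$ takes values in $\{0,1,\ldots,d-1\}$, so the whole content is in the first point.

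\medskip

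For the properness, I would take an arbitrary edge $\{m,m'\}$ of $G_{r,d}$. By definition there are distinct variables $x_i$ and $x_j$ with $m' = \frac{m}{x_i}x_j$; in particular $m$ is divisible by $x_i$. Writing $m = x_0^{t_0}\cdots x_{d-1}^{t_{d-1}}$, passing from $m$ to $m'$ decreases the exponent $t_i$ by one and increases $t_j$ by one. Since $\varrho_d$ is additive with respect to multiplication of monomials (this is the ``extend linearly'' clause of the definition), we get
\[
  \varrho_d(m') - \varrho_d(m) \equiv (j - i) \pmod d.
\]
Because $0 \le i, j \le d-1$ and $i \ne j$, the integer $j - i$ lies in $\{-(d-1),\ldots,-1,1,\ldots,d-1\}$ and so is not congruent to $0$ modulo $d$. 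Hence $\varrho_d(m') \ne \varrho_d(m)$, and $\varrho_d$ is a proper colouring.

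\medskip

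Since a proper colouring of $G_{r,d}$ using the $d$ colours $\{0,\ldots,d-1\}$ exists, we conclude $\chi(G_{r,d}) \le d$. I do not anticipate any real obstacle here: the only thing to be a little careful about is verifying the additivity $\varrho_d(mm'') = \varrho_d(m) + \varrho_d(m'') \pmod d$ that underlies the computation of $\varrho_d(m') - \varrho_d(m)$, but this is immediate from the fact that $\varrho_d$ is defined on a monomial as a fixed $\mathbb{Z}$-linear functional of its exponent vector, reduced modulo $d$. One could alternatively phrase the whole argument by saying $\varrho_d$ is the composition of the group homomorphism $\mathbb{Z}^d \to \mathbb{Z}/d\mathbb{Z}$, $(t_0,\ldots,t_{d-1}) \mapsto \sum_k k\, t_k$, with the exponent map, and that an edge of $G_{r,d}$ changes the exponent vector by $e_j - e_i$ with $i \ne j$, which is not in the kernel.
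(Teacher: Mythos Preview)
Your proof is correct and follows exactly the same argument as the paper: take an edge $\{m,m'\}$ with $m'=\frac{m}{x_i}x_j$, observe that $\varrho_d(m')-\varrho_d(m)\equiv j-i\not\equiv 0\pmod d$ since $0\le i\ne j\le d-1$, and conclude that $\varrho_d$ is a proper $d$-colouring. Your write-up is somewhat more detailed (explicitly justifying additivity and the nonvanishing of $j-i$ mod $d$), but the approach is identical.
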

 \begin{proof}
     If $(m,m')$ is an edge of
        $G_{r,d}$, then there exist $i\neq j$ such that
        $m'=\frac{m}{x_{i}}x_{j}$. So, $\varrho_d(m)-\varrho_d(m')=i-j\not\equiv 0 \pmod
        d$. Thus, $m$ and $m'$ receive different colours and $\varrho_d$ is a
        proper $d$-colouring of $G_{r,d}$.
\end{proof}

  \begin{proposition}
     The chromatic number $\chi(G_{r,d})$ equals the clique
     number  $\omega(G_{r,d})$ and both equal $d$.
 \end{proposition}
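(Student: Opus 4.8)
The plan is to squeeze $\omega(G_{r,d})$ and $\chi(G_{r,d})$ between the same two bounds. On the one hand, $\omega(G) \le \chi(G)$ holds for every graph, and the preceding lemma gives $\chi(G_{r,d}) \le d$ via the standard colouring $\varrho_d$; together these yield $\omega(G_{r,d}) \le \chi(G_{r,d}) \le d$. On the other hand, I would exhibit an explicit clique on $d$ vertices, which forces $\omega(G_{r,d}) \ge d$ and collapses the whole chain to equality.

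For the clique, the natural candidate is the ``star'' generated by $x_0^{r-1}$: put $m_i = x_0^{r-1} x_i$ for $0 \le i \le d-1$, so that $m_0 = x_0^{r}$ and, for $i \ge 1$, $m_i$ carries $x_0$ to the power $r-1$ and $x_i$ to the power $1$. These are $d$ pairwise distinct monomials of degree $r$. To see that they are pairwise adjacent in $G_{r,d}$, note that for $i \ne j$ with $i,j \ge 1$ we have $m_j = (m_i/x_i)\,x_j$, and for each $i \ge 1$ we have $m_i = (m_0/x_0)\,x_i$; in every case the two monomials differ by replacing one variable by another, which is exactly the adjacency relation of $G_{r,d}$. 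Hence $\{m_0,\dots,m_{d-1}\}$ induces a $K_d$, so $\omega(G_{r,d}) \ge d$, and combining with the upper bound gives $\omega(G_{r,d}) = \chi(G_{r,d}) = d$.

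There is no genuine obstacle here: the upper bound is already supplied by the colouring lemma, and the only step requiring care is checking adjacency within the exhibited clique, which is the short computation above. The degenerate cases behave as expected and are subsumed by the same construction — when $d = 1$ the ``clique'' is the single vertex $x_0^{r}$, and when $r = 1$ it is the full set of variables $\{x_0,\dots,x_{d-1}\}$, which already spans $G_{1,d} = K_d$.
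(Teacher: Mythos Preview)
Your proof is correct and follows exactly the same approach as the paper: the paper also uses the colouring lemma for $\chi(G_{r,d})\le d$, exhibits the same clique $\{x_0^{r}, x_0^{r-1}x_1,\ldots,x_0^{r-1}x_{d-1}\}$ to obtain $\omega(G_{r,d})\ge d$, and closes with the standard inequality $\omega\le\chi$. Your version merely adds the explicit adjacency check and the remarks on degenerate cases.
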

 \begin{proof}
         From the previous lemma we know that  $\chi(G_{r,d})\leq d$.
        Clearly the vertices in $\{x_{0}^{r},
        x_{0}^{r-1}x_{1},\ldots, x_{0}^{r-1}x_{d-1}\}$ form a
        clique. Thus $\omega(G_{r,d})\geq d$. But for any graph $G$ we
        have that $\omega(G)\leq \chi(G)$ and the result follows.
  \end{proof}

 Observe that in the previous proof we show that $x_{0}^{r}$ is in an
 $d$-clique. Actually, any monomial in   $G_{r,d}$ is in as many  cliques of
 size $d$ as the number of different variables in the monomial. That is,
 if $x_i\,|\,m$, then the vertices $\{\frac{m}{x_i} x_0,
        \ldots, \frac{m}{x_i} x_{d-1}\}$ form a clique. Thus, any
 colour class of a $d$-colouring of $G_{r,d}$ dominates $V(G_{r,d})$. So any colour class of a $d$-colouring of $G_{r,d}$ is a dominating independent set and
 thus, it is a maximal independent set and a minimal dominating set.

 Another important observation is that a colour class of
 a $d$-colouring of $G_{r,d}$ dominates
 the vertex subset $V(G_{r-1,d})$ in $TG_{r,d}$. This is because the neighbours of a
 monomial $m$ of rank $r-1$ in $V(G_{p,d})$ are $\{m\,x_0, \ldots, m\,x_{d-1}\}$ and 
 form a $d$-clique. So they must intersect each colour class of a $d$-colouring of $G_{r,d}$.

 We now define the function $\overline{f}(r,d)$ to be the minimum size of a
 chromatic class in the standard coloring $\varrho_d$ of $G_{r,d}$. The previous
 paragraph proves the following.

 \begin{proposition}\label{ff_bounds_f}
     For all $r\leq d$, we have $f(r,d)\leq \overline{f}(r,d)$.
 \end{proposition}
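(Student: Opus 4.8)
The plan is to produce, for each $r$ and $d$ with $r\le d$, an explicit pure multicomplex of degree $r$ that contains $\mathcal{M}_{r-1,d}$ and whose set of top-degree monomials has cardinality exactly $\overline{f}(r,d)$. Since $f(r,d)$ is by definition the smallest possible size of such a top layer, the inequality $f(r,d)\le\overline{f}(r,d)$ follows at once from the construction.

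First I would record the reformulation of $f(r,d)$ that is implicit in the discussion of $G_{r,d}$: a set $U$ of degree-$r$ monomials is the top layer of a pure multicomplex of degree $r$ containing $\mathcal{M}_{r-1,d}$ if and only if every monomial of degree $r-1$ divides some member of $U$. Indeed, the candidate multicomplex is then $\mathcal{M}_{r-1,d}$ together with $U$; downward closure is immediate, and purity of degree $r$ amounts exactly to no monomial of degree $r-1$ being maximal, which is the stated divisibility condition. Phrased graph-theoretically, $U$ must dominate $V(G_{r-1,d})$ inside $TG_{r,d}$, and so $f(r,d)$ equals the minimum size of such a set $U$.

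Next I would take $U$ to be a colour class of minimum size in the standard $d$-colouring $\varrho_d$ of $G_{r,d}$, so that $|U|=\overline{f}(r,d)$. The key step — essentially the argument given in the paragraph preceding the statement — is that this $U$ satisfies the divisibility condition: given a monomial $m$ of degree $r-1$, the $d$ monomials $mx_0,\dots,mx_{d-1}$ are pairwise adjacent in $G_{r,d}$, since each is obtained from another by deleting one variable and inserting a different one, so they form a $d$-clique and therefore receive all $d$ colours under the proper colouring $\varrho_d$; hence some $mx_k$ lies in $U$, and $m\mid mx_k$. Thus $U$ is a valid top layer and $f(r,d)\le|U|=\overline{f}(r,d)$. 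I do not expect a genuine obstacle here: the only points needing care are checking that the exhibited multicomplex really satisfies all the defining properties (that it is a multicomplex, pure of degree $r$, and contains $\mathcal{M}_{r-1,d}$), and that every colour class is non-empty so that $\overline{f}(r,d)$ is a well-defined positive integer — the latter being guaranteed by the $d$-clique $\{x_0^r,x_0^{r-1}x_1,\dots,x_0^{r-1}x_{d-1}\}$ already exhibited, which uses all $d$ colours.
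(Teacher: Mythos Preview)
Your proposal is correct and follows essentially the same approach as the paper: the paper's proof is precisely the observation in the paragraph preceding the proposition, namely that the neighbours in $V(G_{r,d})$ of any degree-$(r-1)$ monomial $m$ are $\{mx_0,\ldots,mx_{d-1}\}$, which form a $d$-clique and hence meet every colour class of $\varrho_d$, so any minimum-size colour class is a valid dominating set witnessing $f(r,d)\le\overline{f}(r,d)$. Your write-up is slightly more careful in spelling out the translation back to multicomplexes and in noting that every colour class is non-empty, but the substance is identical.
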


  Now, it is easy to give an upper bound for $f(r,d)$.

\begin{proposition}
   For all $r\leq d$, we have that $f(r,d)\leq \binom{r+d-1}{d-1}/d$.
 \end{proposition}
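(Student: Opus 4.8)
The plan is to combine Proposition~\ref{ff_bounds_f} with an elementary counting argument, so that almost nothing new has to be proved. By Proposition~\ref{ff_bounds_f} we already have $f(r,d)\le\overline{f}(r,d)$, where $\overline{f}(r,d)$ is the minimum size of a colour class in the standard colouring $\varrho_d$ of $G_{r,d}$. Hence it suffices to exhibit a colour class of $\varrho_d$ of size at most $\binom{r+d-1}{d-1}/d$.

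First I would count the vertices of $G_{r,d}$: by definition these are exactly the monomials of degree $r$ in the $d$ variables $x_0,\dots,x_{d-1}$, and by the standard stars-and-bars count there are $\binom{r+d-1}{d-1}$ of them. The colouring $\varrho_d$ partitions this vertex set according to residue modulo $d$, so into at most $d$ classes. Using $r\ge 1$, the $d$ monomials $x_0^{r-1}x_0,\,x_0^{r-1}x_1,\dots,x_0^{r-1}x_{d-1}$ realise the residues $0,1,\dots,d-1$ respectively, so every residue class is nonempty; thus there are exactly $d$ colour classes and $\overline{f}(r,d)$ is genuinely well defined. By the pigeonhole principle the smallest of these $d$ classes has size at most their average, namely at most $\binom{r+d-1}{d-1}/d$, whence $\overline{f}(r,d)\le\binom{r+d-1}{d-1}/d$.

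Combining the two inequalities gives $f(r,d)\le\overline{f}(r,d)\le\binom{r+d-1}{d-1}/d$, which is the claim. There is no real obstacle here: the only point meriting a word of care is that all $d$ residue classes are occupied, which is precisely what licenses dividing by $d$ rather than by the (a priori possibly smaller) number of colours actually used; this is handled by the explicit monomials above. Since $f(r,d)$ is an integer one actually obtains the marginally stronger bound $f(r,d)\le\lfloor\binom{r+d-1}{d-1}/d\rfloor$, but the stated inequality is all that is required in the sequel.
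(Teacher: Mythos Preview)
Your proof is correct and follows exactly the same idea as the paper's own argument: the minimum colour class in the standard $d$-colouring of $G_{r,d}$ has size at most the average class size $\binom{r+d-1}{d-1}/d$, and then one invokes Proposition~\ref{ff_bounds_f}. The paper compresses this into a single sentence, while you spell out the vertex count, the pigeonhole step, and the (worthwhile) observation that all $d$ residue classes are actually occupied; but the underlying reasoning is identical.
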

\begin{proof}
  $\binom{p+d-1}{d-1}/d$ is the average size of a colour class in a
  $d$ colouring of $G_{r,d}$.
\end{proof}

 While trying to find a formula for $\overline{f}(r,d)$, our
 computations appeared to point to the number of aperiodic  necklaces with $r$
 black beads and $d$ white
 beads, also known as the number of binary Lyndon words of length $r+d$
 and density $r$.  \emph{Binary necklaces} or  \emph{\it necklaces
 of beads with colours black and  white} are circular sequences of 0's
 and 1's, where two sequences obtained by a rotation are considered
 the same. That is, the necklaces of length $n$ are the orbits of the
 action of the  cyclic group $C_n$ on circular
 sequences of 0's and 1's of length $n$. A necklace of length $n$ is
 called {\it aperiodic} if the orbit has size $n$.

 The number of aperiodic necklaces with $n$ beads, $r$
 black and $d$   white,  is
 \[
     L_2(r,d)=\frac{1}{r+d}\sum_{k|(r+d,r)} \mu(k)\binom{(r+d)/k}{r/k},
 \]
 where $(a,b)$ denotes the greatest common divisor of  the integers
 $a$ and $b$ and $\mu$ is the
 classical M\"obius function. This formula is well known and is a typical
 example of the M\"obius inversion formula, see~\cite{van-lint+wilson:book}. In particular,
 note that when $d$ and $r$ are coprimes, the formula simplifies to $ \binom np/n$= $\binom{n-1}{p-1}/p$=$\binom{n-1}{d-1}/d$.

 \begin{theorem}
      If $n$ and $r$ are coprime, then $\overline{f}(r,n-r)$ equals
      the number of aperiodic necklaces of $n$ beads, $r$ black and
      $d=n-r$ white.
 \end{theorem}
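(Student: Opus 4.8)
The plan is to produce a group action on the monomials of degree $r$ in $d$ variables which permutes the colour classes of the standard colouring $\varrho_d$ transitively; this forces all colour classes to have the same size, and that common size is then simply the average number of monomials per colour, which the formula displayed just before the theorem identifies with $L_2(r,d)$ in the coprime case.

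First I would record the trivial but necessary observation that $\gcd(n,r)=\gcd(r+d,r)=\gcd(d,r)$, so that the hypothesis ``$n$ and $r$ coprime'' coincides with ``$r$ and $d$ coprime'', and recall (as already noted in the text) that in this case $L_2(r,d)=\frac{1}{n}\binom{n}{r}=\frac{1}{d}\binom{n-1}{d-1}$, which is precisely the average size of a colour class in the $d$-colouring $\varrho_d$ of $G_{r,d}$, since the number of degree-$r$ monomials in $d$ variables is $\binom{r+d-1}{d-1}=\binom{n-1}{d-1}$.

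The heart of the argument is the following. Let $\sigma$ be the cyclic shift of variables, $\sigma(x_i)=x_{(i+1)\bmod d}$, extended multiplicatively to a bijection of the vertex set of $G_{r,d}$ (the degree-$r$ monomials). I would then verify the identity $\varrho_d(\sigma(m))\equiv\varrho_d(m)+r\pmod d$ for every monomial $m$ of degree $r$: writing $m=\prod_{i=0}^{d-1}x_i^{t_i}$ with $\sum_i t_i=r$, the exponent of $x_j$ in $\sigma(m)$ equals $t_{(j-1)\bmod d}$, so $\varrho_d(\sigma(m))\equiv\sum_j j\,t_{(j-1)\bmod d}\equiv\sum_i (i+1)t_i\equiv\varrho_d(m)+r\pmod d$. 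Consequently $\sigma$ restricts to a bijection from the colour class $\varrho_d^{-1}(c)$ onto $\varrho_d^{-1}(c+r)$. Since $\gcd(r,d)=1$, the map $c\mapsto c+r$ generates the cyclic group $\mathbb{Z}/d\mathbb{Z}$, so iterating $\sigma$ yields bijections between any two colour classes; hence all $d$ colour classes have the same cardinality, which must therefore equal the average $\frac{1}{d}\binom{n-1}{d-1}$. Since $\overline{f}(r,d)$ is the minimum of these colour-class sizes, it equals $\frac{1}{d}\binom{n-1}{d-1}=L_2(r,d)$, as required.

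I do not expect a real obstacle here; the only delicate points are keeping the indices straight modulo $d$ in the colour-shift computation and being clear about where coprimality is used. Without it, $\sigma$ permutes the colour classes in $\gcd(r,d)$ orbits each of size $d/\gcd(r,d)$, and the classes genuinely need not all be of equal size, so the restriction to the coprime case is essential rather than cosmetic.
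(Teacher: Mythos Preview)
Your proof is correct and follows essentially the same route as the paper: both use the cyclic variable shift $\sigma$ (the paper calls it $\varphi$), compute the identical colour-shift identity $\varrho_d(\sigma(m))\equiv\varrho_d(m)+r\pmod d$, and use coprimality of $r$ and $d$ to conclude that all colour classes have the same size. The only cosmetic difference is that the paper spells out the bijection between $C_d$-orbits of monomials and binary necklaces to identify that common size with $L_2(r,d)$ combinatorially, whereas you reach the same conclusion numerically via the average $\tfrac{1}{d}\binom{n-1}{d-1}$ and the displayed formula for $L_2$ in the coprime case.
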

 \begin{proof}
   Consider $\varphi$ the action of the cyclic group $C_d$ over
   $G_{r,d}$ given by 
   \[\varphi(x_{0}^{t_{0}}\cdots x_{d-1}^{t_{d-1}})
   = x_{1}^{t_{0}}\cdots x_{d-1}^{t_{d-2}}x_{0}^{t_{d-1}}.\] The
   orbits of this action correspond to necklaces with $r$ black beads
   and $d$ white beads. Variables correspond to white beads and to the
   right of the black bead corresponding to  $x_i$ we place as many
   black beads as the exponent of $x_i$, for $0\leq i\leq d-1$. The
   orbits of size $p+r=n$ correspond to aperiodic necklaces.

   Let us see the effect of $\varphi$ on the standard colouring
   $\varrho$, that is, we want to find  $\varrho(\varphi(m))$ for a
   monomial $m$=$x_{0}^{t_{0}}\cdots x_{d-1}^{t_{d-1}}$. We have
   $\varrho(\varphi(m))-\varphi(m)\equiv r \mod d$. Thus, every orbit
   has size $d$ and all the monomials in the orbit have different
   colours. We conclude that in this case the number of aperiodic
   necklaces equals  the common size of any colour class in the
   standard colouring $\varrho$ of $G_{r,d}$.
 \end{proof}

  \begin{conjecture}\label{conjecture}
       $f(r,d)=\overline{f}(r,d)= L_2(r,d)$.
  \end{conjecture}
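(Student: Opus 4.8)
The plan is to establish the two equalities separately. Since Proposition~\ref{ff_bounds_f} already gives $f(r,d)\le \overline f(r,d)$, it suffices to prove the arithmetic identity $\overline f(r,d)=L_2(r,d)$ and the extremal inequality $f(r,d)\ge \overline f(r,d)$; together these force $L_2(r,d)=\overline f(r,d)\le f(r,d)\le \overline f(r,d)=L_2(r,d)$. Throughout write $n=r+d$ and $e=(d,r)$.

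For the identity $\overline f(r,d)=L_2(r,d)$ I would compute the size of every colour class of $\varrho_d$ by a roots-of-unity filter. With $\omega=e^{2\pi i/d}$, the number of degree-$r$ monomials of colour $c$ equals
\[
  \frac1d\sum_{j=0}^{d-1}\omega^{-cj}\,[x^r]\prod_{i=0}^{d-1}\frac{1}{1-\omega^{ij}x}.
\]
Grouping the index $j$ by $k=(j,d)$ and using $\prod_{i=0}^{d-1}(1-\omega^{ij}x)^{-1}=(1-x^{d/k})^{-k}$, the inner coefficient of $x^r$ equals $\binom{n/m-1}{\,d/m-1\,}$ with $m=d/k$ when $m\mid r$ (equivalently $m\mid e$), and $0$ otherwise, while the character sum over each group is a Ramanujan sum $s_m(c)$. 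The elementary identity $\tfrac1d\binom{n/m-1}{d/m-1}=\tfrac1n\binom{n/m}{r/m}$ then rewrites the whole count as $\tfrac1n\sum_{m\mid e}s_m(c)\binom{n/m}{r/m}$; since $s_m(c)=\mu(m)$ whenever $(c,d)=1$, every colour coprime to $d$ gives a class of size exactly $L_2(r,d)$. Because the relabelling $x_i\mapsto x_{ui}$ for a unit $u$ modulo $d$ is an automorphism of $G_{r,d}$ sending the colour-$c$ class to the colour-$uc$ class, the class size depends only on $(c,d)$, and the identity follows once one checks that for each divisor $g>1$ of $d$ the value $\tfrac1n\sum_{m\mid e}s_m(g)\binom{n/m}{r/m}$ is at least $L_2(r,d)$; I expect this positivity to come from the multiplicativity of $s_m$ together with the monotonicity of $m\mapsto\binom{n/m}{r/m}$. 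An equivalent, more combinatorial route uses the $C_d$-action $\varphi$: a necklace of minimal period $p$, so $q:=n/p$ divides $e$, contributes weight $e/q$ to each colour class inside one residue class modulo $e$, the aperiodic necklaces contributing the full weight $e$, so that the lightest residue class is counted by the aperiodic necklaces.

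The inequality $f(r,d)\ge \overline f(r,d)$ is the main obstacle. By the discussion preceding Proposition~\ref{ff_bounds_f}, it asserts that every set $T$ of degree-$r$ monomials for which each degree-$(r-1)$ monomial $m'$ has some $m'z_i\in T$ satisfies $|T|\ge L_2(r,d)$. The obvious double count $\binom{n-2}{d-1}=|V(G_{r-1,d})|\le\sum_{m\in T}\#\{\text{variables of }m\}\le d|T|$, i.e.\ the fractional domination bound, only gives $|T|\ge\binom{n-2}{d-1}/d$, which already falls short of $L_2(r,d)$ when $r=d=3$; so the integrality of $T$ must be used in an essential way. A naive induction on $n$ in the spirit of Lemma~\ref{lem:recursion_f(p,d)} and Theorem~\ref{thm:main} does not close either: splitting $T$ by the exponent of $z_d$ makes each part dominate a smaller instance, but the parts only recombine to give $\max\bigl(f(r,d-1),f(r-1,d)\bigr)$ rather than their sum. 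The route I would pursue is a compression argument of Kruskal--Katona type: among minimum dominating sets pick a most shifted one (as closed as possible under replacing a higher-indexed variable by a lower one), show that shifting neither increases $|T|$ nor destroys domination of $V(G_{r-1,d})$ in $TG_{r,d}$, and then classify the shifted minimum dominating sets and match them against the minimal colour classes of $\varrho_d$. The crux --- and where I expect the genuine difficulty --- is precisely showing that compression preserves domination in this upward sense; failing that, one could try to refine the inductive split by bounding from below the number of degree-$(r-1)$ monomials in $z_1,\dots,z_{d-1}$ that are covered in $T$ only through $z_d$, using the count from the first part.
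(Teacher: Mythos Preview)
The statement you are attempting to prove is presented in the paper as a \emph{conjecture}, not a theorem; there is no proof in the paper to compare against. In the conclusion the authors say explicitly that they can verify $f(r,d)=\overline f(r,d)$ only for $d\le 3$ (and for $d=4$, $1\le r\le 6$), and that $\overline f(r,d)=L_2(r,d)$ has merely been checked in Maple for small parameters. The only result the paper proves in this direction is the theorem immediately preceding the conjecture, which handles the special case $(n,r)=1$ (equivalently $e=(r,d)=1$), where the $C_d$-action $\varphi$ forces all colour classes to have equal size.

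Your roots-of-unity computation is correct and genuinely extends that coprime theorem: you show that every colour class with colour $c$ satisfying $(c,d)=1$ has size exactly $L_2(r,d)$, and your automorphism argument correctly gives that the class size depends only on $(c,d)$. But you have not proved the conjecture, and you essentially say so yourself. First, for $\overline f(r,d)=L_2(r,d)$ you still owe the inequality $\tfrac 1n\sum_{m\mid e}s_m(g)\binom{n/m}{r/m}\ge L_2(r,d)$ for every divisor $g>1$ of $d$; ``I expect this positivity to come from the multiplicativity of $s_m$ together with the monotonicity of $m\mapsto\binom{n/m}{r/m}$'' is not an argument, since Ramanujan sums take negative values and multiplicativity alone gives no sign control. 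Second, and more seriously, the lower bound $f(r,d)\ge\overline f(r,d)$ remains entirely open in your write-up: you rightly observe that the fractional domination bound is too weak and that the induction of Lemma~\ref{lem:recursion_f(p,d)} goes the wrong way, but the compression/shifting programme is only a sketch, and you yourself flag its crux (that shifting preserves domination of $V(G_{r-1,d})$) as unproved. What you have is a reasonable partial-progress report on an open problem, not a proof.
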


 Notice that if $I'$ is a set
 of monomials of size $f(r,d)$ which dominates the vertices in
 $G_{r-1,d}$, it is a dominating set in $G_{r,d}$. This is because, if
 $m\in V(G_{r,d})$, then for some $x_i$ the monomial $m'=m/x_i$ is in
 $V(G_{r-1,d})$. But the set of neigbours of $m'$ in $V(G_{p,d})$
 is $S=\{m'\,x_0, \ldots, m'\,x_{d-1}\}$ and $m\in
 S$. As an element $m''$ of $I'$ has to be in $S$ and $S$ induces a
 complete graph, we conclude that $m''$ and $m$ are adjacent.

\subsection{The function g(p,d)}

 For a connected paving matroid we can use the Brown-Colbourn
 Theorem~\ref{th:Brown-Colbourn}
 mentioned earlier to bound $h_r$ for $r\geq 1$ from below by
 \[
    S(r,n)= (-1)^{r-1}\sum_{i=0}^{r-1} {(-1)^{i}\binom{n-r+i-1}i}.
 \]
 A few values of $S$ are given by the following.
 \begin{proposition}\mbox{ }
 \begin{itemize}
  \item $S(1,n)=1$ for all $n\geq 1$.
  \item $S(2,n)=n-3$ for all $n\geq 2$.
  \item $S(n,n)=(-1)^{n-1}$ for all $n\geq 1$.
  \item $S(n-1,n)= n-1 \mod 2$ for $n\geq 2$.
  \item $S(n-2,n)=\lfloor \frac{n-1}{2} \rfloor$
  \end{itemize}
  \end{proposition}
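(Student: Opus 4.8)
The plan is to compute $S(r,n)$ directly from its defining formula
\[
S(r,n)=(-1)^{r-1}\sum_{i=0}^{r-1}(-1)^{i}\binom{n-r+i-1}{i},
\]
by recognizing the alternating partial sum of binomial coefficients along a ``hockey-stick''-style diagonal. Concretely, I would first record the telescoping identity that, for any integers $a,j$ with $a\ge 0$,
\[
\sum_{i=0}^{j}(-1)^{i}\binom{a+i}{i}=(-1)^{j}\binom{a+j}{j}\cdot\frac{?}{?}
\]
— more precisely I would use the cleaner fact that $\sum_{i=0}^{j}(-1)^{i}\binom{a+i}{i}$ can be evaluated by pairing consecutive terms and applying Pascal's rule, since $\binom{a+i}{i}-\binom{a+i+1}{i+1}=-\binom{a+i}{i+1}$. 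This lets one rewrite the alternating sum as a single binomial coefficient (up to sign), which depends only on the parity of the number of terms. Setting $a=n-r-1$ and $j=r-1$, one obtains a closed form for $S(r,n)$ in terms of $\binom{n-2}{r-1}$ and its parity behaviour; the factor $(-1)^{r-1}$ out front is exactly what is needed to make the result come out non-negative in the relevant ranges.

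With that closed form in hand, each of the five bullet points follows by substitution. For $S(1,n)$ the sum has a single term $i=0$, giving $\binom{n-2}{0}=1$. For $S(2,n)$ the sum is $\binom{n-3}{0}-\binom{n-2}{1}=1-(n-2)=3-n$, and the sign $(-1)^{1}$ gives $n-3$. For $S(n,n)$ we have $n-r-1=-1$, so the binomials are $\binom{i-1}{i}$, which vanish for $i\ge 1$ and equal $1$ for $i=0$, leaving $(-1)^{n-1}$. For $S(n-1,n)$ the upper parameter is $0$, so $\binom{i}{i}=1$ for every $i$ and the alternating sum of $n-1$ ones is $0$ or $1$ according to the parity of $n-1$; multiplying by $(-1)^{n-2}$ preserves that, giving $(n-1)\bmod 2$. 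For $S(n-2,n)$ the upper parameter is $1$, so $\binom{1+i}{i}=i+1$, and the sum $\sum_{i=0}^{n-3}(-1)^{i}(i+1)$ is a standard alternating arithmetic sum whose value is $\lfloor (n-1)/2\rfloor$ up to sign, and the outer $(-1)^{n-3}$ fixes the sign.

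None of these steps is genuinely hard — the computation is elementary — so I expect the only real ``obstacle'' to be bookkeeping: keeping track of the outer sign $(-1)^{r-1}$ together with the internal sign that emerges from the telescoping, and handling the degenerate binomial coefficients $\binom{-1}{i}$, $\binom{i-1}{i}$ correctly (using the convention $\binom{m}{k}=0$ for $0\le m<k$, and $\binom{-1}{0}=1$). I would therefore prove the closed form once carefully as a preliminary identity, being explicit about which binomial convention is in force, and then let the five cases fall out as one-line substitutions. A sanity check against small values ($n=2,3,4$) at the end would confirm the signs.
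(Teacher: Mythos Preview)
Your concrete computations for each of the five bullets are correct and are essentially identical to the paper's own proof: both simply substitute $r=1,2,n,n-1,n-2$ into the defining sum, identify the binomial $\binom{n-r+i-1}{i}$ in each case, and evaluate the resulting short alternating sum, using the same convention $\binom{a}{0}=1$ and $\binom{a}{b}=0$ for $0\le a<b$. The only difference is cosmetic: your opening paragraph promises a general closed form via telescoping (complete with a ``$?/?$'' placeholder), but you never actually establish or use it---the five cases are handled by direct substitution, exactly as in the paper, so that preamble can simply be dropped.
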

  \begin{proof}\mbox{ }
    \begin{itemize}
  \item $S(1,n)=(-1)^0 \binom{n-1}0=1$ for all $n\geq 1$.
  \item $S(2,n)$ = $(-1) ( \binom{n-3}0 - \binom{n-2}1)$ = $n-3$ for all $n\geq 2$.
  \item $S(n,n)$ = $(-1)^{n-1}\sum_{i=0}^{n-1} (-1)^i\binom{i-1}i$
  = $(-1)^{n-1}$ for all $n\geq 1$. Here we adopt the usual convention that $\binom a0=1$ for
  all $a$ and $\binom ab=0$ for all integers $a$ and $b$
  such that $b>a$ and $b> 0$.
  \item $S(n-1,n)$ = $(-1)^{n-2}\sum_{i=0}^{n-2} {(-1)^i \binom ii}$ = $(-1)^{n-2}\sum_{i=0}^{n-2}{(-1)^i}$ that is 1 if $n$ is even
  and 0 otherwise.
  \item  $S(n-2,n)$ = $(-1)^{n-3}\sum_{i=0}^{n-3} {(-1)^i \binom{i+1}i}$ = $(-1)^{n-3}\sum_{i=0}^{n-3}{(-1)^i (i+1)}$. This is
  $(-1)^{n-3}(-(n-1)/2)=(n-2)/2$ if $n-2$ is even and $(n-1)/2$ if
  $n-2$ is odd.
  \end{itemize}
  \end{proof}
  The sequence $\{S(r,n)\}$ has the same  recursion  as the binomial
  coefficients.
 \begin{theorem}
  For $n\geq r\geq 1$
   \[
      S(r+1,n+1)=S(r+1,n)+S(r,n)
   \]
 \end{theorem}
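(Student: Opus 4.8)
The plan is to verify the identity by a direct computation: expand the left-hand side $S(r+1,n+1)$ from its definition, apply Pascal's rule to each binomial coefficient in the summand, and then recognise the two resulting sums as $S(r+1,n)$ and $S(r,n)$. No clever idea is needed; the whole content is careful bookkeeping of indices and signs, exactly as for the ordinary Pascal recursion it mimics.

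Concretely, first I would write, using $(n+1)-(r+1)+i-1=n-r+i-1$ and $(r+1)-1=r$,
\[
  S(r+1,n+1)=(-1)^{r}\sum_{i=0}^{r}(-1)^{i}\binom{n-r+i-1}{i}.
\]
Next, for each $i$ I would replace $\binom{n-r+i-1}{i}$ by $\binom{n-r+i-2}{i}+\binom{n-r+i-2}{i-1}$ and split the sum in two. The first piece is, since $n-r+i-2=n-(r+1)+i-1$,
\[
  (-1)^{r}\sum_{i=0}^{r}(-1)^{i}\binom{n-r+i-2}{i}
   =(-1)^{r}\sum_{i=0}^{r}(-1)^{i}\binom{n-(r+1)+i-1}{i}=S(r+1,n).
\]
For the second piece the $i=0$ term is $\binom{n-r-2}{-1}=0$, so after reindexing $j=i-1$ it becomes, using $(-1)^{r+1}=(-1)^{r-1}$,
\[
  (-1)^{r}\sum_{j=0}^{r-1}(-1)^{j+1}\binom{n-r+j-1}{j}
   =(-1)^{r-1}\sum_{j=0}^{r-1}(-1)^{j}\binom{n-r+j-1}{j}=S(r,n).
\]
Adding the two pieces yields the claim.

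The only point requiring a little care — and the place where the conventions of the excerpt must be pinned down — is the behaviour of the binomial coefficients at the boundary: one needs Pascal's rule $\binom{m}{i}=\binom{m-1}{i}+\binom{m-1}{i-1}$ to hold for all the (possibly nonpositive) integers $m=n-r+i-1$ that occur, together with $\binom{m}{-1}=0$, so that the $i=0$ term of the second sum vanishes. A clean way to sidestep any ambiguity is to first apply upper negation $\binom{n-r+i-1}{i}=(-1)^{i}\binom{r-n}{i}$ to rewrite $S(r,n)=(-1)^{r-1}\sum_{i=0}^{r-1}\binom{r-n}{i}$, which reduces the whole statement to the elementary identity $\sum_{i=0}^{r}\binom{a+1}{i}=\sum_{i=0}^{r}\binom{a}{i}+\sum_{i=0}^{r-1}\binom{a}{i}$ with $a=r-n$, again an immediate consequence of Pascal's rule. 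I expect this small convention check to be the only obstacle; the proof is otherwise entirely routine, and it can also be obtained just as quickly by induction on $n$ using the same Pascal split.
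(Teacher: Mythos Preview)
Your proof is correct and follows exactly the route the paper indicates: the paper's proof is the single sentence ``This follows directly by the Pascal--Stifel's formula $\binom{n+1}{r+1}=\binom{n}{r+1}+\binom{n}{r}$'', and your argument is precisely the termwise application of Pascal's rule that unpacks this. Your caution about the boundary conventions is well placed (the recursion at $n=r$ requires the polynomial extension of $\binom{m}{i}$ to negative $m$, which is the convention used elsewhere in the paper via upper negation), but otherwise nothing is missing.
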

 \begin{proof}
   This follows directly by the Pascal--Stifel's formula $\binom{n+1}{r+1}$= $\binom n{r+1}+\binom nr$.
  \end{proof}

This result is enough to show that the integer sequence $\{S(r,n)\}$
 is sequence A108561 
 in \cite{sloane:online}, as both satisfy the same recurrence and the
 same boundary conditions. 
 
  How does $S(r,n)$
 compare with $f(r,n-r)$? We can prove the following.
 \begin{theorem}
   For $n\geq 3$ and $r\leq n-2$ we have that $f(r,n-r)\leq S(r,n)$
  \end{theorem}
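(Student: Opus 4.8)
The plan is to notice that, after the change of variable $d=n-r$, the quantities $f(r,n-r)$ and $S(r,n)$ obey the same Pascal-type recurrence, so that the inequality reduces to an induction whose only real content is matching up a few boundary values. Concretely, substituting $d=n-r$ into Lemma~\ref{lem:recursion_f(p,d)} gives
\[
   f(r,n-r)\ \le\ f\bigl(r,(n-1)-r\bigr)+f\bigl(r-1,(n-1)-(r-1)\bigr),
\]
and the recurrence $S(r+1,n+1)=S(r+1,n)+S(r,n)$ proved above is, after shifting indices, exactly $S(r,n)=S(r,n-1)+S(r-1,n-1)$. Thus if the two terms on the right of the displayed inequality are already known to be at most $S(r,n-1)$ and $S(r-1,n-1)$, then $f(r,n-r)\le S(r,n)$ follows at once.

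I would run the induction on $n$, the claim at stage $n$ being that $f(r,n-r)\le S(r,n)$ for all $r$ with $1\le r\le n-2$. The base stage $n=3$ involves only $r=1$, where $f(1,2)=1=S(1,3)$ by Lemma~\ref{lem:initial_values_f} and the proposition listing small values of $S$. In the inductive step ($n\ge 4$) the extreme values of $r$ are handled directly: for $r=1$ one has $f(1,n-1)=1=S(1,n)$, and for $r=n-2$ (so $d=2$) Lemma~\ref{lem:initial_values_f} gives $f(n-2,2)=\lceil (n-2)/2\rceil$ while the proposition gives $S(n-2,n)=\lfloor (n-1)/2\rfloor$, and these two integers coincide for every $n$, so equality holds. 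For the remaining range $2\le r\le n-3$ one applies the displayed inequality and then the induction hypothesis at stage $n-1$ to each of $f(r,(n-1)-r)$ and $f(r-1,(n-1)-(r-1))$: both are admissible instances, since $1\le r\le (n-1)-2$ and $1\le r-1\le (n-1)-2$, and $n-1\ge 3$ throughout. This yields $f(r,n-r)\le S(r,n-1)+S(r-1,n-1)=S(r,n)$, completing the step.

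The only genuine obstacle is a bookkeeping one at the edge of the admissible region: the term $f(r,(n-1)-r)$ can be bounded by the induction hypothesis only when $r\le (n-1)-2$, i.e.\ $r\le n-3$, so the value $r=n-2$ lies outside the reach of the recursion and must be dealt with separately — and it is precisely there that $f$ and $S$ agree exactly, so no inequality is lost. Everything else is routine: the recursive step is a one-line comparison of two identical recurrences, and the handful of boundary identities needed are all already recorded in Lemma~\ref{lem:initial_values_f} and the proposition on small values of $S$.
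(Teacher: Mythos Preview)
Your argument is correct and follows the same route as the paper: verify the boundary cases $r=1$ and $r=n-2$ directly from Lemma~\ref{lem:initial_values_f} and the small-value proposition for $S$, then compare the Pascal recurrence for $f$ (Lemma~\ref{lem:recursion_f(p,d)}) with that for $S$ and finish by induction. Your write-up is in fact more careful than the paper's, which leaves the induction variable and the admissible-range bookkeeping implicit.
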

  \begin{proof}
    For $r=1$, $f(1,n-1)=1=S(1,n)$ for all $n$. For $r=n-2$, we have
    $f(n-2,2)=\lceil \frac{n-2}{2} \rceil$ = $\lfloor
    \frac{n-1}{2} \rfloor=
    S(n-2,n)$. By Lemma~\ref{lem:recursion_f(p,d)}, $f(r,n-r)\leq
    f(r,n-r-1)+f(r-1,n-r)$. Using induction this is at most
    $S(r, n-1)+S(r-1,n-1)$ which equals $S(r,n)$.
  \end{proof}

A coloopless paving matroid that is not connected must have rank one, so the previous result implies that $f(r,n-r)\leq
 g(r,n)$ for $n\geq 3$ and $2\leq r\leq n-2$.  
 Thus, we have an alternative proof of
 Corollary~\ref{Stanley_Paving} because it is easy to check the inequality for the remaining values of $r$ and $n$.

  \section{Conclusion}

  We have proved Stanley's conjecture for paving matroids. This adds another
  case to the stream of result that prove the conjecture for a
  particular family of matroids~\cite{ha+stokes+zanello:O-sequences,merino:chip-firing-matroid-complexes,oh,schweig,stokes}. However,  paving
  matroids have been conjecture to be ``most'' matroids
  in~\cite{mayhew+newman+welsh+whittle} while for the other classes for which Stanley's conjecture has been established, the proportion of
  matroids with $n$ elements in each class is negligible in comparison
  with the
  total number of matroids with $n$ elements as $n$ tends to
  infinity. Also, notice that any rank three simple matroid is paving.

  The problem of giving good lower bounds on the number of bases of a
  paving matroid appears to be a challenging but interesting problem.
  The function $f(r,n-r)$ gives
  us a lower bound for the number of bases of paving matroids, but is
  not tight in most cases. When $n-r=2$ and $n$ is even,
  $f(r,n-r)$ gives the lower bound of $n(n-2)/2$ for $n$ which is
  achieve by the  dual matroid of the 2-thickening of $U_{2,m}$; when $n$ is odd it
  gives the lower bound of $(n-1)^2/2$ which is
  achieved by the  dual matroid of the free extension of the
  2-thickening of $U_{2,m}$. So, in this case $f(r,n-r)$ is a tight
  lower  bound. But for the case of $n-r=3$ the situation is quite
  different. When rank is 2, the bound gives 6 bases, which are achieve
  by the paving matroid $U_{1,3}\oplus U_{1,2}$. When the rank is 3
  the lower bound gives 13 but there are 8 coloopless paving matroids
  with 6 elements and rank 3, yet the minimum number of bases is
  15. Even if we use $S(r,n)$ for obtaining a lower bound, we get 14
  in this case.

  The function $f(p,d)$  is very intriguing. The function itself seems
  very difficult to compute from the definition. We can prove that
  $f(p,d)=\overline{f}(p,d)$ for $d=1,2,3$ and all $p\geq 1$ and also
  for $d=4$ for $1\leq p\leq 6$.  We have checked that
  $\overline{f}(p,d)= L_2(p,d)$ for many small values of $p$ and $d$
  with $(p,d)>1$  by using  Maple. 
  Conjecture~\ref{conjecture} would imply, for example that
  $f(p,d)=f(d,p)$ which
  geometrically is not so easy to see and we have been unable to prove.

\small

\setlength{\itemsep}{-.8mm}


\end{document}